\pdfoutput=1 
\newif\ifpreprint
\preprinttrue
\ifdefined\ispreprint
  \preprinttrue
 \fi
\ifdefined\isjournal
  \preprintfalse
\fi

\ifpreprint 

\documentclass{article}

\usepackage{mathtools}
\usepackage{mathrsfs}
\usepackage{todonotes}
\usepackage{hyperref}
\usepackage{color,graphicx}
\usepackage{amsfonts}
\usepackage{amssymb}
\usepackage{amsthm}
\usepackage{amsmath}
\usepackage[normalem]{ulem}
\usepackage{comment}
\usepackage{mathrsfs} 

\newtheorem{theorem}{Theorem}[section]
\newtheorem{lemma}[theorem]{Lemma}

\newtheorem{remark}[theorem]{Remark}
\newtheorem{proposition}[theorem]{Proposition}
\newtheorem{definition}[theorem]{Definition}
\newtheorem{corollary}[theorem]{Corollary}

  \numberwithin{equation}{section}

\newenvironment{@abssec}[1]{%
     \if@twocolumn
       \section*{#1}%
     \else
       \vspace{.05in}\footnotesize
       \parindent .2in
         {\upshape\bfseries #1. }\ignorespaces 
     \fi}
     {\if@twocolumn\else\par\vspace{.1in}\fi}
\newcommand{\email}[1]{\protect\href{mailto:#1}{#1}}
\newenvironment{keywords}{\begin{@abssec}{Key words}}{\end{@abssec}}
\newenvironment{MSCcodes}{\begin{@abssec}{MSC codes}}{\end{@abssec}}
\else 

\fi

\newcommand{\ri}{\mathrm{i}}
\newcommand{\re}{\mathrm{e}}
\newcommand{\rd}{\, \mathrm{d}}
\newcommand{\C}{\mathbb{C}}

\newcommand{\N}{\mathbb{N}}

\newcommand{\T}{\mathbb{T}}
\newcommand{\R}{\mathbb{R}}

\newcommand{\Z}{\mathbb{Z}}

\newcommand{\calO}{\mathcal{O}}

\usepackage[backend=biber,style=numeric,maxbibnames=99,giveninits=true,isbn=false,url=false]{biblatex}
\DeclareFieldFormat*{title}{\mkbibemph{#1}}
\DeclareFieldFormat*{citetitle}{\mkbibemph{#1}}
\DeclareFieldFormat{journaltitle}{#1}

\AddToHook{package/biblatex/after}{%
	\renewbibmacro{in:}{}%
	\AtEveryBibitem{
		\clearfield{day}
		\clearfield{month}
		\clearfield{number}
		\clearlist{location}}
}

\newbibmacro*{pubinstorg+location+date}[1]{%
  \printlist{#1}%
  \newunit
  \printlist{location}%
  \newunit
  \usebibmacro{date}%
  \newunit}

\renewbibmacro*{publisher+location+date}{\usebibmacro{pubinstorg+location+date}{publisher}}
\renewbibmacro*{institution+location+date}{\usebibmacro{pubinstorg+location+date}{institution}}
\renewbibmacro*{organization+location+date}{\usebibmacro{pubinstorg+location+date}{organization}}

\usepackage{xcolor}
\definecolor{darkblue}{RGB}{0,60,180} 
\definecolor{darkgreen}{RGB}{0,130,70}
\definecolor{darkorange}{RGB}{180,60,0}

\allowdisplaybreaks

\ifpreprint
\title{M{\"o}bius-Transformed Trapezoidal Rule for Polynomial Weights}
\else
\fi

\author{Nuutti Hyv\"onen\thanks{Department of Mathematics and Systems Analysis, School of Science, Aalto University, Espoo, FI-00076 Aalto, Finland
  (\email{nuutti.hyvonen@aalto.fi}).}
  \and Yuya Suzuki\thanks{Corresponding author. Department of Mathematics and Systems Analysis, School of Science, Aalto University, Espoo, FI-00076 Aalto, Finland
  (\email{yuya.suzuki@aalto.fi}).}}

\begin{document}

\maketitle
\begin{abstract}
    This work studies numerical integration by the Möbius-transformed trapezoidal rule, which combines the classical trapezoidal rule with a change of variables induced by a M\"obius transformation that maps the unit circle onto the real line. It is shown that this method achieves the optimal convergence rate for a polynomially weighted integral over the real line if the integrand lives in a related polynomially weighted Sobolev space with positive integer smoothness index. This result can also be generalized in a slightly weaker form for fractional smoothness indices via complex interpolation of function spaces. The algorithm only requires pointwise evaluations of the weight and the target integrand at prescribed nodes that do not depend on the integrand and weight in question. The established  theoretical convergence rates are verified by numerical experiments.
\end{abstract}

\begin{keywords}
numerical integration, polynomial weight, trapezoidal rule, weighted Sobolev space, M\"{o}bius transformation, optimal algorithm, complex interpolation
\end{keywords}

\begin{MSCcodes}
65D30, 65D32
\end{MSCcodes}

\section{Introduction}

In this paper, we consider numerical integration over the real line with smooth positive weight functions that exhibit polynomial behavior at infinity, with
\begin{equation}
\label{eq:weight0}
\omega_{\upsilon}(x)= \big(1+ x^2 \big)^{-\upsilon/2}, \quad \upsilon \in \R,
\end{equation}
as the generic representative for such a class of weights. 
The reason for choosing the negative sign for $\upsilon$ in the exponent of \eqref{eq:weight} is  our underlying motivation for introducing an optimal numerical quadrature rule for computing expectations for Student's t distribution whose density is obtained from $\omega_{\upsilon}(x)$ for $\upsilon > 1$ by reparametrizing $\nu = \upsilon -1$, scaling $x = y/ \sqrt{\nu}$ and introducing an appropriate $\nu$-dependent normalization constant. Of particular interest is considering the Cauchy distribution,~i.e.,~essentially, Student's t distribution with $\nu = 1$, that is used as a sparsity inducing prior in inverse problems~\cite{Kaipio05}.

With Karvonen, we recently introduced in~\cite{SHK2024} the M\"obius-transformed trapezoidal rule  with optimal error convergence for computing integrals over the real line with monotonic Schwartz weights, i.e., weights that are positive Schwartz functions with monotonic convergence toward zero close to inifnity. In this paper, we extend the results on numerical integration in \cite{SHK2024} to polynomial weights and show that the  M\"obius-transformed trapezoidal rule still achieves an optimal rate of convergence.

To the best of our knowledge, numerical integration over the real line against polynomial weights has not been extensively studied, presumably due to their heavy tails. In particular, standard Gaussian quadrature cannot be constructed in such a setting because the associated orthogonal polynomials do not exist. Polynomial weights are, nonetheless, used in different areas of science, including statistics, finance~\cite{ZG2010}, and mathematical physics \cite{PhysRevE.54.R2197}.  

For general polynomial weights/decay, we are aware of a few results related to numerical integration over the real line for finitely smooth functions, namely, \cite{H1998,EGK2026} and \cite[Theorem~2,(ii)]{NS2023}. The former two papers consider the error of the discrete Fourier transform for approximating the Fourier transform on the real line, and the last paper considers numerical integration over $\R^d$. Although these papers assume somewhat different conditions, the analyzed numerical methods can be interpreted as a truncated trapezoidal rule in one dimension. Roughly speaking, given a smoothness parameter $\alpha$, they all prove 
convergence rates strictly slower than $n^{-\alpha}$ with $n$ sampling points. 

In contrast, we demonstrate that the M\"obius-transformed trapezoidal rule attains the convergence rate $n^{-\alpha}$ when the integrand function is in an appropriately weighted Sobolev space with smoothness index $\alpha \in \N$. Furthermore, we are able to generalize some of our results for fractional smoothness indices via complex interpolation of polynomially weighted Sobolev spaces~\cite{Lofstrom1982}. These results can again be related to \cite{H1998,EGK2026}, where the smoothness of a function is defined via the decay of its Fourier transform, thus allowing to consider fractional smoothness.  

This text is organized as follows. Section~\ref{sec:preliminaries} introduces and studies the considered class of weights and defines some associated weighted Sobolev spaces, including spaces of fractional smoothness defined via complex interpolation of Banach spaces. Section~\ref{sec:main_results} recalls the M\"obius-transformed trapezoidal rule and proves that it attains the optimal rate of convergence in the sense of worst-case error (for integer smoothness indices). Section~\ref{sec:exact_omega} demonstrates that the M\"obius-transformed trapezoidal rule exhibits polynomial exactness when applied to integrals involving the basic weight $\omega_\upsilon$ from \eqref{eq:weight0}. The predicted convergence rates are verified via numerical experiments in Section~\ref{sec:numerics}, and the concluding remarks are presented in the Section~\ref{sec:conclusion}.

\section{Preliminaries}
\label{sec:preliminaries}
In this section, we first formally introduce the studied weight functions and consider their basic properties. Next, we define the weighted and periodic Sobolev spaces used in our analysis. Finally, we recall the change of variables that forms the core of the studied quadrature rule and is motivated by a M\"obius transformation that maps the unit disk onto the lower half plane.

\subsection{Polynomial weights}

We start by defining the class of studied weights. In what follows, we assume all considered polynomials have real coefficients.

\begin{definition}
Let $q = q_{2m}: \R \to \R_+$ be a positive polynomial of degree $2m$ for $m \in \N$. We define the associated family of weights through
\begin{equation}
\label{eq:weight}
\rho_{\upsilon,q}(x)  = (q(x))^{-\upsilon / 2m}, \quad x \in \R,
\end{equation}
where $\upsilon \in \R$ is a free parameter.
\end{definition}

The basic weight $\omega_\upsilon$ from \eqref{eq:weight0} is obviously of the form \eqref{eq:weight}.
Take note that, in order for $q$ to be a positive polynomial, the coefficient of the highest order term in $q$ must be positive. Besides this simple observation, we do not further dwell on conditions that make a polynomial positive, but rather refer to \cite{Lasserre09} for more information.
In what follows, we drop the subindex $q$ from $\rho_{\upsilon,q}$,~i.e.,~we simply write $\rho_\upsilon$ and implicitly assume the existence of a fixed positive polynomial $q$ of degree $2m$ defining $\rho_\upsilon$ via \eqref{eq:weight}. The reader should note, however, that all constants in our estimates depend on $q$; in particular, the presented estimates are not uniform with respect to the choice of $q$.

The following lemma characterizes how differentiation affects the weights defined by \eqref{eq:weight} in comparison to $\omega_\upsilon$
\begin{lemma} \label{lemma:basic}
For any $\tau \in \N_{0}$ and $\upsilon \in \R$, 
\begin{equation*}
c_\upsilon \omega_\upsilon(x) \leq  \, \rho_{\upsilon}(x) \leq C_{\upsilon} \omega_\upsilon(x)
\qquad x \in \R, 
\end{equation*}
and
\begin{equation*}
\big|  \rho_\upsilon^{(\tau)}(x) \big| \leq C_{\upsilon, \tau} \, \omega_{\upsilon+\tau}(x), 
\qquad x \in \R, 
\end{equation*}
for some constants $c_\upsilon, C_{\upsilon},  C_{\upsilon, \tau}> 0$ that depend on $q$ in \eqref{eq:weight}.
\end{lemma}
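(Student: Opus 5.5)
The plan is to compare $q$ with $(1+x^2)^m$ and then handle derivatives by an induction that tracks the structure of $\rho_\upsilon^{(\tau)}$.

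\textbf{Two-sided bound.} Since $q$ is positive and continuous with positive leading coefficient $a_{2m}$, the quotient $g(x) = q(x)/(1+x^2)^m$ is continuous and strictly positive on $\R$. Moreover $g(x)\to a_{2m}>0$ as $|x|\to\infty$, so $g$ extends continuously and positively to the compactification $[-\infty,\infty]$. Hence there are constants $0<a\le b$ with
\[
a(1+x^2)^m\le q(x)\le b(1+x^2)^m, \qquad x\in\R.
\]
Raising this to the power $-\upsilon/2m$ gives the first claim, with constants $a^{-\upsilon/2m}$ and $b^{-\upsilon/2m}$; their order depends on the sign of $\upsilon$, and we take $c_\upsilon,C_\upsilon$ to be the minimum and maximum of the two.

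\textbf{Derivative structure.} For the derivatives, I will prove by induction on $\tau$ that
\[
\rho_\upsilon^{(\tau)}(x) = p_\tau(x)\, q(x)^{-\upsilon/2m-\tau},
\]
where $p_\tau$ is a polynomial of degree at most $\tau(2m-1)$. The case $\tau=0$ holds with $p_0=1$. For the inductive step, differentiating gives
\[
p_\tau' q^{-\upsilon/2m-\tau} + p_\tau\,(-\upsilon/2m-\tau)\,q'\,q^{-\upsilon/2m-\tau-1},
\]
so that
\[
p_{\tau+1} = p_\tau' q - (\upsilon/2m+\tau)\,p_\tau q'.
\]
Its degree is at most $\deg p_\tau + 2m-1 \le (\tau+1)(2m-1)$.

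\textbf{Derivative bound.} Using $|p_\tau(x)|\le C(1+x^2)^{\tau(2m-1)/2}$ together with the two-sided bound on $q$ (applied with exponent $-\upsilon/2m-\tau$), we obtain
\[
|\rho_\upsilon^{(\tau)}(x)|\le C(1+x^2)^{\tau(2m-1)/2}(1+x^2)^{-\upsilon/2-m\tau} = C(1+x^2)^{-(\upsilon+\tau)/2} = C\,\omega_{\upsilon+\tau}(x).
\]

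The only step needing care is the degree count in the induction, since it is what produces exactly one extra power of decay per derivative. The rest is routine, and the constants depend on $q$ through $a$, $b$ and the coefficients of $p_\tau$.
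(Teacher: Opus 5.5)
Your proof is correct and follows essentially the same route as the paper: the same induction $\rho_\upsilon^{(\tau)} = p_\tau\, q^{-(\upsilon+2m\tau)/2m}$ with $\deg p_\tau \le (2m-1)\tau$, followed by comparison with powers of $1+x^2$. You spell out what the paper only summarizes as ``smooth and behaving like $|x|^{-\upsilon-\tau}$ at infinity''. You do this through the compactness argument for $q/(1+x^2)^m$ and the explicit degree bound on $p_\tau$, and your ``degree at most'' is slightly more accurate than the paper's ``degree $(2m-1)\tau$''.
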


\begin{proof}
The first bound follows from $\omega_\upsilon$ and  $\rho_{\upsilon}$ being smooth and positive functions that behave asymptotically as $|x|^{-\upsilon}$ at infinity.

A straightforward induction argument demonstrates that
\[
\rho_\upsilon^{(\tau)}(x) = \frac{r_{(2m -1)\tau}(x)}{(q_{2m}(x))^{(\upsilon + 2m \tau)/2m}}, \quad \tau \in \N_0,
\]
where $r_{(2m-1)\tau}$ is a polynomial of degree $(2m-1)\tau$. Since $\rho_\upsilon^{(\tau)}$ is a smooth function that behaves as $\mathcal{O}(|x |^{-\upsilon - \tau})$ at infinity, the claim follows.
\end{proof}

Observe that the second bound in Lemma~\ref{lemma:basic} is valid, yet suboptimal, even if $\upsilon$ is a negative integer multiple of $2m$ since in that case, $\rho_\upsilon^{(\tau)}$ vanishes after a finite number of differentiations. It is also essential to note that $\rho_{\upsilon}$ is {\em polynomially regular} as defined in \cite[p.~197]{Lofstrom1982} for any $\upsilon \in \R$.

\subsection{Weighted and periodic Sobolev spaces} 
Let $L_{\rho_\upsilon}^p (\R)$ denote the space of Lebesgue measurable functions on the real line with the norm
\[
\|f\|_{L_{\rho_\upsilon} ^p (\R)}
\coloneqq
\bigg(\int_\R |f(x)|^p \rho_\upsilon(x) \rd x\bigg)^{1/p}
, \qquad 1 \leq p < \infty.
\]
The corresponding $\rho_{\upsilon}$-weighted Sobolev space with a nonnegative smoothness index $\alpha \in \N_0$ is defined as
\begin{align} \label{eq:sobolev-space}
W^{\alpha,p}_{\rho_{\upsilon}}(\R) \coloneqq \bigg\{f\in L^p_{\rho_{\upsilon}}(\R)\ \Big| \ \|f\|_{W^{\alpha,p}_{\rho_{\upsilon}}(\R)} \coloneqq \bigg(\sum_{\tau=0}^\alpha \int_\R |f^{(\tau)}(x)|^p \rho_{\upsilon}(x) \rd x  \bigg)^{1/p}  \! \! < \infty \bigg\} 
\end{align}
for $1 \leq p  < \infty$. As discussed in \cite[Section~4]{Lofstrom1982}, $W^{\alpha,p}_{\rho_{\upsilon}}(\R)$ coincides with the weighted Bessel potential space
\begin{align} \label{eq:bessel-potential-space}
P^{\alpha,p}_{\rho_{\upsilon}}(\R) \coloneqq \Big\{f\in L_{\rho_{\upsilon}} ^p (\R) \ \big| \ \|f\|_{P^{\alpha,p}_{\rho_{\upsilon}}(\R)} \coloneqq \big \| \mathscr{F}^{-1} \big( (1 + | \cdot |^2)^{\alpha/2} \mathscr{F} f \big) \big\|_{L^p_{\rho_{\upsilon}}(\R)} < \infty \Big\} 
\end{align}
for $\alpha \in \N_0$, $1 < p < \infty$, and with $\mathscr{F}$ denoting the one-dimensional Fourier transform. In particular, $P^{\alpha,p}_{\rho_{\upsilon}}(\R)$ remains well-defined for all real $\alpha \geq 0$. Moreover, the complex interpolation space
\begin{equation}
\label{eq:complex_int_weight}
\big[P^{\alpha_0,p}_{\rho_{\upsilon_0}}(\R), P^{\alpha_1,p}_{\rho_{\upsilon_1}}(\R) ]_\theta =  P^{\tilde{\alpha},p}_{\rho_{\tilde{\upsilon}}}(\R), \qquad \theta \in (0,1),
\end{equation}
is also a weighted Bessel potential space with
\[
\tilde{\alpha} = (1-\theta)\alpha_0 + \theta \alpha_1, \quad 
\tilde{\upsilon} = (1-\theta)\upsilon_0 + \theta \upsilon_1,
\]
as revealed by \cite[Eq.~(5.3) in Theorem~4]{Lofstrom1982} since 
\[
\rho_{\upsilon_0}^{1-\theta} \rho_{\upsilon_1}^{\theta} = \rho_{\tilde{\upsilon}}.
\]
We define $W^{\alpha,p}_{\rho_{\upsilon}}(\R)$ in the customary manner  via \eqref{eq:sobolev-space} for $\alpha \in \N_0$ and $1 \leq p < \infty$ but slightly abuse the notation by extending the definition for a general $\alpha \geq 0$ and $1 < p < \infty$ through the identification $W^{\alpha,p}_{\rho_{\upsilon}}(\R) = P^{\alpha,p}_{\rho_{\upsilon}}(\R)$.

To deduce somewhat more optimal results on the convergence of the M\"obius-transformed trapezoidal rule for Sobolev spaces with integer smoothness order, we also need weighted Sobolev spaces that utilize different polynomial weights for different derivatives:
\begin{equation} 
\label{eq:sobolev-space2}
\widetilde{W}^{\alpha,p}_{\rho_{\upsilon}}(\R) \coloneqq \bigg\{f\in L^p_{\rho_{\upsilon+\alpha p}}(\R)\ \Big| \  \|f\|_{\widetilde{W}^{\alpha,p}_{\rho_{\upsilon}}(\R)} < \infty \bigg\}, \qquad 1 \leq p < \infty, \  \alpha \in \N_0,
\end{equation}
with
\[
\|f\|_{\widetilde{W}^{\alpha,p}_{\rho_{\upsilon}}(\R)} \coloneqq \bigg(\sum_{\tau=0}^\alpha \int_\R |f^{(\tau)}(x)|^p \rho_{\upsilon + p(\alpha - \tau)}(x) \rd x  \bigg)^{1/p}.
\]
Although the highest weak derivative of order $\alpha$ has the same weight in the definitions of $\| \, \cdot \, \|_{W^{\alpha,p}_{\rho_{\upsilon}}(\R)}$ and $\| \, \cdot \, \|_{\widetilde{W}^{\alpha,p}_{\rho_{\upsilon}}(\R)}$, the space $\widetilde{W}^{\alpha,p}_{\rho_{\upsilon}}(\R)$ is larger than $W^{\alpha,p}_{\rho_{\upsilon}}(\R)$ with a weaker norm due to the less strict condition on the behavior of the lower derivatives.\footnote{As a simple example, $f(x) = x$ belongs to $\widetilde{W}^{\alpha,1}_{\omega_{2+\epsilon-\alpha}}(\R)$ for any $\alpha \in \N_0$ and $\epsilon > 0$, but not even to $L_{\omega_2}^1 (\R) = W^{0,1}_{\omega_{2}}(\R)$.} To be more precise, one has
\begin{equation}
\label{eq:embeddings}
W^{\alpha,p}_{\rho_{\upsilon}}(\R) \subset \widetilde{W}^{\alpha,p}_{\rho_{\upsilon}}(\R) \subset W^{\alpha,p}_{\rho_{\upsilon + p \alpha}}(\R),
\end{equation}
which follows from the decreasing strictness of the integrability conditions  in this chain of spaces. We do not consider fractional smoothness orders for the spaces \eqref{eq:sobolev-space2} because we are not aware of an explicit characterization of the related interpolation spaces.

The convergence of the M\"obius-transformed trapezoidal rule is based on properties of periodic Sobolev spaces on the one-dimensional torus $\T$ (i.e.,~the unit circle) defined by identifying the end points of the interval $[0,2 \pi]$. For smoothness indices $\alpha \in \N_0$, such spaces are traditionally defined as
\begin{align}\label{eq:Lq-per-sob} 
    W^{\alpha,p}(\T) &= \bigg\{ f\in L^p(\T)  \ \Big| \  \|f\|_{W^{\alpha,p}(\T)}^p \coloneqq 
    \sum_{\tau=0}^{\alpha} \int_{\T} |f^{(\tau)}(x) |^p \rd x   < \infty \bigg\} 
    \end{align}
    for $1\leq p<\infty$. Via a linear isometry, $W^{\alpha,p}(\T)$ has an alternative characterization as a subspace of the standard Sobolev space $W^{\alpha,p}(0,2\pi)$ on $(0,2\pi)$ with compatibility conditions between the values of weak derivatives at $0$ and $2\pi$; see,~e.g.,~\cite[Proposition~2.3]{SHK2024} that also holds for $p=1$ since $W^{1,1}(0,2\pi) \subset C([0,2\pi])$ with a continuous embedding~\cite[Chapter~8, Remark~10]{Brezis11}. That is, $W^{\alpha,p}(\T)$ can be identified with
    \begin{align}
    \label{eq:Lq-per-sob2} 
    \nonumber
    W^{\alpha,p}_{\rm per}(0,2\pi)&:=
    \bigg\{ f\in L^p(0,2\pi)  \ \Big| \  \|f\|_{W^{\alpha,p}(0,2\pi)}^p\coloneqq \sum_{\tau=0}^{\alpha} \int_0^{2\pi} |f^{(\tau)}(\theta) |^p \rd \theta  < \infty, \\[-1mm]
    & \kern4.2cm f^{(\tau)}(0)=f^{(\tau)}(2\pi) \text{ for } \tau=0,\ldots,\alpha-1  \bigg\} 
\end{align}
for $\alpha \in \N_0$. According to, e.g., \cite[Section~1.44]{Ullrich2007},  the space $W^{\alpha,p}(\T)$ allows yet another equivalent definition as
\begin{align}\label{eq:Lq-per-sob3} 
    W^{\alpha,p}(\T) &\coloneqq \Bigg\{ f\in L^p(\T)  \ \Big| \  \|f\|_{W^{\alpha,p}(\T)} \coloneqq \bigg\| \sum_{k= -\infty}^\infty \hat{f}(k) (1 + k^2)^{\alpha/2} \, \mathrm{e}^{{\rm i} k \, \cdot} \bigg \|_{L^p(\T)}  \!\! \! \! < \infty \Bigg\}    
\end{align}
for  $1  < p < \infty$ and with $\hat{f}(k)$, $k \in \Z$, denoting the Fourier coefficients of $f$. As in the case of \eqref{eq:bessel-potential-space} for weighted Bessel potential spaces, the definition \eqref{eq:Lq-per-sob3} can also be used for real smoothness indices $\alpha \geq 0$, and we thus use \eqref{eq:Lq-per-sob3} to extend the definition of $W^{\alpha,p}(\T)$ and its norm for $1 < p < \infty$ and $\alpha \geq 0$. Finally, with this convention, complex interpolation of function spaces gives
\begin{equation}
\label{eq:complex_int_per}
\big[ W^{\alpha_0,p}(\T), W^{\alpha_1,p}(\T)\big]_\theta = 
W^{\tilde{\alpha},p}(\T), \qquad 1  < p < \infty,
\end{equation}
where $\tilde{\alpha} = (1-\theta)\alpha_0 + \theta \alpha_1$; see,~e.g.,~\cite[Theorem~1.7]{Ullrich2007} with $r=\tilde{\alpha}$, $p_0 = p_1$ and $q_0 = q_1 = 2$.

\subsection{M\"obius transformation induced change of variables}

The construction of the M\"obius-transformed trapezoidal rule is motivated by conformally mapping the unit disk in the complex plain onto a half plane and thus the unit circle onto a line.
Without significant loss of generality (cf.~\cite{Hille59,SHK2024}), consider
\begin{equation}
    \Phi_{\gamma}(z) = - {\rm i} \gamma \, \frac{ z +1 }{z - 1}, \qquad z \in \C, \ \gamma \in \R_+,
\end{equation}
that maps the unit disk onto the lower half plane. Parametrizing the restriction of $\Phi_{\gamma}$ onto the unit circle by the polar angle $\theta\in (0,2\pi)$ gives the sought for change of variables,
\begin{align}
\label{eq:variable_change}
\phi_\gamma(\theta) &:= \Phi_{\gamma}(\re^{\ri \theta})
= - \gamma \cot\!\Big(\frac{\theta}{2}\Big).
\end{align}
In the subsequent analysis, we also need the identities
\begin{equation}
\label{eq:invphi}
    \phi_\gamma'(\theta) = \frac{\gamma}{2 \sin^2(\theta/2)}, \quad 
    \phi_\gamma^{-1}(x) = 2 \, {\rm arccot}\!\left(-\dfrac{x}{\gamma}\right), \quad (\phi_\gamma^{-1})'(x) = \frac{2 \, \gamma}{\gamma^2 + x^2}.
\end{equation}
For the sake of brevity, we usually write $\phi(\theta)$ instead of $\phi_\gamma(\theta)$.

\section{M\"obius-transformed trapezoidal rule for polynomial weights}
\label{sec:main_results}

Our aim is to approximate the weighted integral
\begin{align}
\label{eq:integral}
    I_{\rho_\upsilon}(f) &\coloneqq \int_{\R} f(x) \rho_{\upsilon}(x)\rd x = \int_0^{2\pi} f(\phi(\theta)) \rho_{\upsilon}(\phi(\theta)) \phi'(\theta) \rd \theta
\end{align}
for a continuous function $f: \R \to \C$. The studied approximation,~i.e.,~the M{\"o}bius-transformed trapezoidal rule, is defined as
\begin{align}\label{eq:quadrature}
    Q_{\rho_\upsilon,n}(f) \coloneqq \frac{2\pi}{n}\sum_{j=1}^n f(\phi(\theta_j))\rho_\upsilon (\phi(\theta_j)) \phi'(\theta_j) \approx I_{\rho_\upsilon}(f) ,
\end{align}
where $\theta_j\coloneqq 2\pi j/n - \pi/n$ for $j=1,\ldots,n$.\footnote{The purpose of the shift by $\pi/n$ is to avoid the end points of the interval $[0,2\pi]$ that correspond to infinity on the real line. See also \cite[Remark~3.3]{SHK2024}.}
That is, one simply applies the trapezoidal rule on the unit circle to the right-hand side of the identity~\eqref{eq:integral}.
In what follows, when an integrand function $f$ lives in a Sobolev space $W^{\alpha,p}_{\rm loc}(\R)$ with $\alpha>1/p$, we always consider the continuous representative in the corresponding Sobolev equivalence class. This makes the application of the M{\"o}bius-transformed trapezoidal rule $Q_{\rho_{\upsilon},n}(f)$ well-defined.

\subsection{Upper bound of convergence for $\alpha \in \N$}
\label{sec:upperbound}
We start by presenting our main result on the convergence of the M\"obius-transformed trapezoidal rule for a positive integer smoothness index $\alpha$. The proof is completed by Proposition~\ref{thm:f-rho-norm-p} and Corollary~\ref{cor:integer} below, and it is based on a classical result on the convergence of the standard trapezoidal rule on periodic Sobolev spaces.
\begin{theorem}
\label{thm:main_result}
    For any $\upsilon \in \R$, $1 < p < \infty$ and $\alpha \in \N$,
        \begin{equation}
            \label{eq:main_result}
        \big| I_{\rho_\upsilon}(f) - Q_{\rho_\upsilon,n}(f) \big| \leq C_{\upsilon, \alpha, p} \, n^{-\alpha} \| f \|_{\widetilde{W}^{\alpha,p}_{\rho_{p(\upsilon - 2  - 2 \alpha) + 2}}(\R)},  
    \end{equation}
    where $C_{\upsilon, \alpha, p} > 0$ is independent of $f$ and $n \in \N$.
\end{theorem}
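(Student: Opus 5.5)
The plan is to pull the problem back to the torus and use the classical error bound for the trapezoidal rule on periodic Sobolev spaces. Set
\[
g(\theta) := f(\phi(\theta))\,\rho_\upsilon(\phi(\theta))\,\phi'(\theta), \qquad \theta \in (0,2\pi).
\]
By \eqref{eq:integral} and \eqref{eq:quadrature}, $I_{\rho_\upsilon}(f) - Q_{\rho_\upsilon,n}(f)$ is exactly the error of the (shifted) trapezoidal rule applied to $g$. The classical result gives, for $g \in W^{\alpha,p}(\T)$ with $\alpha \in \N$ and $1 \le p < \infty$,
\[
\Big|\int_0^{2\pi} g\,\rd\theta - \frac{2\pi}{n}\sum_{j=1}^n g(\theta_j)\Big| \le C\, n^{-\alpha}\,\|g^{(\alpha)}\|_{L^p(0,2\pi)} .
\]
This follows, for instance, from the Euler--Maclaurin/Peano kernel representation with a periodic Bernoulli kernel, combined with Hölder's inequality. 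The theorem therefore reduces to the following statement, which I would formulate as the main proposition:
\[
\|g\|_{W^{\alpha,p}(\T)} \le C\,\|f\|_{\widetilde{W}^{\alpha,p}_{\rho_{p(\upsilon-2-2\alpha)+2}}(\R)} .
\]
Here membership in $W^{\alpha,p}(\T)$ is understood via the characterization \eqref{eq:Lq-per-sob2}, so the endpoint compatibility conditions must also be checked.

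\textbf{Derivative bound.} First I would compute $g^{(\tau)}$ for $\tau \le \alpha$ by the Leibniz rule and Faà di Bruno's formula. The ingredients, written in terms of $x = \phi(\theta)$, are the following.
\begin{itemize}
\item From \eqref{eq:invphi}, $\phi'(\theta) = (\gamma^2 + x^2)/(2\gamma)$, so $\phi' \circ \phi^{-1} \sim \omega_{-2}$.
\item Inductively, $\phi^{(k)}(\theta)$ is a polynomial in $x$ of degree $k+1$, hence bounded by $C\,\omega_{-(k+1)}(x)$.
\item By Lemma~\ref{lemma:basic}, $|\rho_\upsilon^{(j)}| \le C\,\omega_{\upsilon+j}$.
\end{itemize}
A term of $g^{(\alpha)}$ containing $f^{(k)}(x)$ collects $k$ factors of $\phi'$ or higher derivatives of $\phi$, carrying $\alpha$ total $\theta$-derivatives. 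Tracking exponents, each $\theta$-derivative contributes at most a factor $(1+x^2)^{1/2}$ beyond the number of derivatives it puts on $f$ or $\rho$. Every term is then bounded by
\[
C\,|f^{(k)}(x)|\,\omega_{\upsilon - 2 - \alpha - k}(x), \qquad 0 \le k \le \alpha .
\]
The check is: each factor $\phi^{(i)}$ contributes $\omega_{-(i+1)}$, $\rho^{(j)}$ contributes $\omega_{\upsilon+j}$, and the extra $\phi'$ contributes $\omega_{-2}$. The total power is $\upsilon - 2 - \alpha - k$ after using $\sum i = \alpha - j - (\text{derivatives on } \phi')$; I would verify this bookkeeping carefully by induction rather than via full Faà di Bruno.

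\textbf{Change of variables back to $\R$.} Next I would substitute $\rd\theta = 2\gamma(\gamma^2+x^2)^{-1}\rd x \sim \omega_2\,\rd x$, which gives
\[
\int_0^{2\pi} |g^{(\alpha)}|^p \,\rd\theta \le C \sum_{k=0}^{\alpha} \int_\R |f^{(k)}(x)|^p\, \omega_{p(\upsilon-2-\alpha-k)+2}(x)\,\rd x .
\]
Writing $\mu = p(\upsilon - 2 - 2\alpha) + 2$, we have
\[
p(\upsilon - 2 - \alpha - k) + 2 = \mu + p(\alpha - k),
\]
which is precisely the weight of $f^{(k)}$ in $\|f\|_{\widetilde{W}^{\alpha,p}_{\rho_\mu}}$ from \eqref{eq:sobolev-space2}. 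By Lemma~\ref{lemma:basic}, $\omega$ and $\rho$ are interchangeable up to constants. The lower derivatives $g^{(\tau)}$, $\tau < \alpha$, obey the analogous bound with weights $\omega_{\mu + p(\alpha-k) + p(\alpha-\tau)}$. Since $\omega_{s+t} \le \omega_s$ for $t \ge 0$, these are dominated by the same norm.

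\textbf{Main obstacle: periodicity.} The hard step is showing that $g$ genuinely belongs to $W^{\alpha,p}_{\rm per}(0,2\pi)$. That is, $g^{(\tau)}(0) = g^{(\tau)}(2\pi)$ for $\tau < \alpha$, which I expect to hold with both sides equal to zero. The derivative estimates only give $g \in W^{\alpha,p}(\varepsilon, 2\pi - \varepsilon)$ uniformly in $\varepsilon$, hence $g \in W^{\alpha,p}(0,2\pi)$, and so the continuous traces $g^{(\tau)}(0^+)$ and $g^{(\tau)}(2\pi^-)$ exist. I would try to show these traces vanish using the finiteness of the integrals $\int |g^{(\tau)}|^p$ near the endpoints together with the Sobolev embedding. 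If a nonzero limit $c$ existed, then $g^{(\tau)}$ would be comparable to $c$ near the endpoint, and I would aim to derive a contradiction from the fact that the lower-order bounds carry strictly stronger weights. If this argument is delicate, the fallback is to approximate $f$ by compactly supported smooth functions, which are dense in $\widetilde{W}^{\alpha,p}$ because the weights are polynomially regular. For such $f$, $g$ vanishes near the endpoints, and the estimate extends by continuity of both sides. This requires the continuity of $Q_{\rho_\upsilon,n}$ under this approximation for fixed $n$, which follows from local Sobolev embedding.
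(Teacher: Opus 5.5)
Your proposal is correct and follows the paper's overall architecture. You pull back to the torus, apply the classical $n^{-\alpha}$ bound for the trapezoidal rule on $W^{\alpha,p}(\T)$ (the paper cites \cite[Theorem~2.4.1]{T2018_book}; your Peano-kernel argument reproduces it), and bound each term of $g^{(\tau)}$ pointwise by $|f^{(k)}(x)|\,\omega_{\upsilon-2-\tau-k}(x)$. Your bookkeeping in $x$, with $\phi^{(k)}$ a polynomial of degree $k+1$ in $x=\phi(\theta)$, is equivalent to the paper's representation $\phi^{(\tau)}=\psi_\tau/\sin^{\tau+1}(\theta/2)$.

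The genuine difference is in the endpoint conditions \eqref{eq:g-boundary_}. The paper proves that $g^{(\tau)}/\sin^{\varepsilon}(\theta/2)$ is bounded, using the embedding $W^{1,1}(0,2\pi)\subset L^\infty(0,2\pi)$ and H\"older's inequality against a weight in $L^{p'}(\R)$. This forces $\varepsilon<1/p'$ and hence $p>1$. Your contradiction argument works and is simpler, but you should make explicit what drives it. Finiteness of $\int|g^{(\tau)}|^p\rd\theta$ by itself says nothing about boundary values. What matters is the surplus weight you identified for $\tau\le\alpha-1$: since $1/\sin^2(\theta/2)=1+x^2/\gamma^2$, it gives
\[
\int_0^{2\pi}\frac{|g^{(\tau)}(\theta)|^p}{\sin^{p(\alpha-\tau)}(\theta/2)}\rd\theta\le C\,\|f\|^p_{\widetilde{W}^{\alpha,p}_{\rho_{p(\upsilon-2-2\alpha)+2}}(\R)} .
\]
Because $p(\alpha-\tau)\ge 1$, we have $\sin^{-p(\alpha-\tau)}(\theta/2)\ge 2/\theta$ for small $\theta$ (and symmetrically near $2\pi$). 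So a nonzero limit of the continuous function $g^{(\tau)}$ at $0^+$ or $2\pi^-$ would make the left-hand side infinite.

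What each route buys:
\begin{itemize}
\item Your route gives only the vanishing of the limits, not the rate $\mathcal{O}(\sin^{\varepsilon}(\theta/2))$ that the paper's argument yields, but vanishing is all the theorem needs.
\item Since your argument has no H\"older step, it applies verbatim for $p=1$, where the paper only obtains the weaker estimate \eqref{eq:main_result_p}.
\item Your density fallback is therefore unnecessary.
\end{itemize}
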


\begin{proof}
The assertion follows by combining Corollary~\ref{cor:integer} with \cite[Theorem~2.4.1]{T2018_book}, which considers the convergence of the trapezoidal rule on periodic Sobolev spaces.
\end{proof}

Although Theorem~\ref{thm:main_result} is formulated for the same weight family appearing on both sides of \eqref{eq:main_result}, one could, in fact, use weights defined by different positive polynomials on different sides of \eqref{eq:main_result} by virtue of Lemma~\ref{lemma:basic} that guarantees the associated weighted Sobolev norms are equivalent. In particular, one is allowed to replace $\rho_{p(\upsilon - 2  - 2 \alpha) + 2}$ by $\omega_{p(\upsilon - 2  - 2 \alpha) + 2}$ on the right-hand side of \eqref{eq:main_result}. The same conclusion also applies to Theorem~\ref{thm:main_result2} below.

Sometimes, it is natural to assume that the integrand $f$ belongs  to a $\rho_\upsilon$-weighted Sobolev space,~i.e.,~that the weight appearing in the approximated integral and the one in the definition of the domain for the integrand function are the same. Assuming the smoothness of $f$ is not a restrictive factor, the following corollary gives the best convergence rate as a function of $\upsilon$ and $p$.

\begin{corollary}
\label{cor:main_result}
    Let $\upsilon \in \R$ and $1 < p < \infty$. For $\N \ni \alpha \leq  (p-1)(\upsilon - 2)/(2p)$,
    \begin{equation}
    \label{eq:main_result_upsilon}
        \big| I_{\rho_\upsilon}(f) - Q_{\rho_\upsilon,n}(f) \big| \leq C_{\upsilon, p} \, n^{-\alpha} \| f \|_{\widetilde{W}^{\alpha,p}_{\rho_{\upsilon}}(\R)},  
    \end{equation}
    where $C_{\upsilon, p} > 0$ is independent of $f$ and $n \in \N$.
\end{corollary}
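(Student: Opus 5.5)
The plan is to derive the corollary directly from Theorem~\ref{thm:main_result} by a norm comparison. The key observation is that the weights $\rho_\upsilon$ are ordered: since $q$ is positive and tends to infinity, $\rho_{\upsilon'}(x) \le C\,\rho_{\upsilon}(x)$ for all $x \in \R$ whenever $\upsilon' \ge \upsilon$. This follows from Lemma~\ref{lemma:basic}, because $\omega_{\upsilon'} \le \omega_\upsilon$ for $\upsilon' \ge \upsilon$. Consequently, $\widetilde{W}^{\alpha,p}_{\rho_{\upsilon}}(\R) \subset \widetilde{W}^{\alpha,p}_{\rho_{\upsilon'}}(\R)$ with a continuous embedding. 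Indeed, each term in the norm carries the weight $\rho_{\upsilon' + p(\alpha-\tau)}$ versus $\rho_{\upsilon + p(\alpha-\tau)}$, and the shift $\upsilon'-\upsilon \ge 0$ is the same for every $\tau$.

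It therefore suffices to check that $\upsilon' := p(\upsilon - 2 - 2\alpha) + 2 \ge \upsilon$ under the stated constraint on $\alpha$. This is an elementary rearrangement:
\begin{equation*}
p(\upsilon - 2 - 2\alpha) + 2 \ge \upsilon \iff (p-1)\upsilon - 2(p-1) \ge 2p\alpha \iff \alpha \le \frac{(p-1)(\upsilon-2)}{2p}.
\end{equation*}
This is exactly the hypothesis. Note that the condition implicitly forces $\upsilon > 2$, since $\alpha \ge 1$. Then
\begin{equation*}
\|f\|_{\widetilde{W}^{\alpha,p}_{\rho_{\upsilon'}}(\R)}^p = \sum_{\tau=0}^{\alpha}\int_\R |f^{(\tau)}|^p \rho_{\upsilon'+p(\alpha-\tau)} \rd x \le C \sum_{\tau=0}^{\alpha}\int_\R |f^{(\tau)}|^p \rho_{\upsilon+p(\alpha-\tau)} \rd x = C\,\|f\|^p_{\widetilde{W}^{\alpha,p}_{\rho_{\upsilon}}(\R)}.
\end{equation*}

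Inserting this bound into \eqref{eq:main_result} gives \eqref{eq:main_result_upsilon} with constant $C_{\upsilon,\alpha,p}C^{1/p}$. Because $\alpha$ ranges over the finitely many admissible integers $1 \le \alpha \le (p-1)(\upsilon-2)/(2p)$, we may take the maximum over these values and obtain a constant depending only on $\upsilon$ and $p$ (and $q$). No step poses a real obstacle. The only points that need care are the monotonicity of the weights in $\upsilon$ and bookkeeping the uniform shift across all derivative orders, and both are immediate from Lemma~\ref{lemma:basic}.
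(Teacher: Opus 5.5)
Your proposal is correct and follows the paper's argument: the paper also applies Theorem~\ref{thm:main_result}, solving $\upsilon = p(\upsilon-2-2\beta)+2$ to get $\beta = (p-1)(\upsilon-2)/(2p)$ and taking an integer $\alpha \le \beta$. You additionally make explicit two points the paper leaves implicit: that $\rho_{\upsilon'} \le C\rho_\upsilon$ for $\upsilon' \ge \upsilon$ lets the $\widetilde{W}^{\alpha,p}_{\rho_{\upsilon}}(\R)$-norm dominate the $\widetilde{W}^{\alpha,p}_{\rho_{\upsilon'}}(\R)$-norm, and that the constant can be made independent of $\alpha$ because only finitely many values of $\alpha$ are admissible.
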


\begin{proof}
The assertion follows from Theorem~\ref{thm:main_result} by equating $\upsilon = p(\upsilon - 2  - 2\beta) + 2$, solving for $\beta$, and choosing $\alpha$ to be a positive integer that is smaller or equal to~$\beta$.
\end{proof}

Note that one can find a positive integer $\alpha \leq  (p-1)(\upsilon - 2)/(2p)$ if and only if $\upsilon \geq (4 p -2)/(p-1)$.
The lower bound for $\upsilon$ decreases and the highest attainable convergence rate $\alpha$ increases as the function of $p$, with the respective limits $4$ and $(\upsilon -2)/2$ as $p \to \infty$. Such behavior is natural since for a fixed $\upsilon > 1$, a higher $p$ reduces the speed with which elements of $W^{\alpha,p}_{\rho_{\upsilon}}(\R)$ may grow at infinity. 

Let us then complete the proof of Theorem~\ref{thm:main_result}. We start by showing that the integrand on the right-hand side of \eqref{eq:integral} belongs to a periodic Sobolev space whose smoothness is dictated by the regularity and behavior at infinity of $f$. 

\begin{proposition}
    \label{thm:f-rho-norm-p}
    Let $f\in \widetilde{W}^{\alpha,p}_{\rho_{p(\upsilon - 2  - 2\alpha)+2}}(\R)$ with some $\alpha \in \N_0$ and $1 < p < \infty$, and define $g_{\upsilon} = ((f\rho_\upsilon)\circ \phi) \phi'$. It holds that
    \begin{equation}
    \label{eq:g-tau}
    \big\|g_{\upsilon}^{(\tau)} \big\|_{L^p(0,2\pi)} \leq C_{\upsilon, \alpha,p} \|f\|_{\widetilde{W}^{\alpha,p}_{\rho_{p(\upsilon - 2  - 2\alpha) + 2}}(\R)}, \qquad \tau = 0, \dots, \alpha,
    \end{equation}
    where $C_{\upsilon, \alpha,p} > 0$ is independent of $f$. Moreover, for $\tau=0,\ldots,\alpha-1$,
    \begin{equation}\label{eq:g-boundary_}
    \lim_{\theta\to0^+} g_{\upsilon}^{(\tau)}(\theta) =
    \lim_{\theta\to 2\pi ^-} g_{\upsilon}^{(\tau)}(\theta) = 0.
    \end{equation}
    As a consequence, $g_{\upsilon} \in W^{\alpha,p}_{\rm per}(0,2\pi)$.
\end{proposition}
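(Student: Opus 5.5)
The plan is to work directly with the structure of $g_\upsilon = ((f\rho_\upsilon)\circ\phi)\,\phi'$ on $(0,2\pi)$, which corresponds to $x = \phi(\theta)$ ranging over $\R$. First, I will derive a Faà di Bruno type formula for $g_\upsilon^{(\tau)}$. By Leibniz,
\[
g_\upsilon^{(\tau)} = \sum_{k=0}^{\tau} \binom{\tau}{k} \big((f\rho_\upsilon)\circ\phi\big)^{(k)} \phi^{(\tau-k+1)},
\]
and $((f\rho_\upsilon)\circ\phi)^{(k)}$ is a combination of terms $(f\rho_\upsilon)^{(j)}(\phi(\theta))\prod_i \phi^{(m_i)}(\theta)$ with $\sum_i m_i = k$ and $j$ factors. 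The key elementary fact is that $\phi^{(m)}(\theta) = \mathcal{O}(\sin^{-(m+1)}(\theta/2))$. Since $1+\phi(\theta)^2 = (\gamma^2\cos^2+\sin^2)/\sin^2(\theta/2) \asymp \sin^{-2}(\theta/2)$, this gives $|\phi^{(m)}(\theta)| \lesssim \omega_{-(m+1)}(\phi(\theta))$. A product of $j$ such factors with total order $k$ is therefore bounded by $\omega_{-(k+j)}(\phi(\theta))$, and the extra factor $\phi^{(\tau-k+1)}$ contributes $\omega_{-(\tau-k+2)}$. Expanding $(f\rho_\upsilon)^{(j)} = \sum_{l\le j}\binom{j}{l} f^{(l)}\rho_\upsilon^{(j-l)}$ and using Lemma~\ref{lemma:basic} gives $|\rho_\upsilon^{(j-l)}| \lesssim \omega_{\upsilon+j-l}$. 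Collecting everything, each term of $|g_\upsilon^{(\tau)}(\theta)|$ is bounded by
\[
|f^{(l)}(x)|\,\omega_{\upsilon+j-l-k-j-\tau+k-2}(x) = |f^{(l)}(x)|\,\omega_{\upsilon-l-\tau-2}(x), \qquad x=\phi(\theta),\ l\le\tau .
\]
The worst case is $\tau=\alpha$, since $\omega_{\upsilon-l-\tau-2}\le\omega_{\upsilon-l-\alpha-2}$.

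Second, I will change variables back to $x$ in the $L^p$ norm. We have $\rd\theta = (\phi^{-1})'(x)\rd x \asymp \omega_2(x)\rd x$ by \eqref{eq:invphi}, so
\[
\int_0^{2\pi}|g_\upsilon^{(\tau)}|^p\rd\theta \lesssim \sum_{l\le \tau}\int_\R |f^{(l)}(x)|^p\,\omega_{p(\upsilon-l-\alpha-2)+2}(x)\rd x .
\]
The norm of $\widetilde{W}^{\alpha,p}_{\rho_{\mu}}$ with $\mu = p(\upsilon-2-2\alpha)+2$ weights $f^{(l)}$ by $\rho_{\mu+p(\alpha-l)} \asymp \omega_{p(\upsilon-2-\alpha-l)+2}$. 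This matches exactly, which yields \eqref{eq:g-tau} after invoking Lemma~\ref{lemma:basic} to pass between $\omega$ and $\rho$.

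Third, I will establish the boundary limits \eqref{eq:g-boundary_}; I expect this to be the main obstacle. The $L^p$ bounds show that $g_\upsilon \in W^{\alpha,p}(0,2\pi)$, so $g_\upsilon^{(\tau)}$ for $\tau\le\alpha-1$ extends continuously to $[0,2\pi]$ and the one-sided limits exist. To show that they vanish, I would argue that the pointwise bound above implies the limit is zero. Near $\theta=0$, write $h = g_\upsilon^{(\tau)}$ with $\tau\le\alpha-1$. Since $h' = g_\upsilon^{(\tau+1)}\in L^p$, we have $|h(\theta)-h(0^+)|\lesssim \theta^{1-1/p}$. If $h(0^+)=c\neq0$, then $|h|\ge |c|/2$ near $0$. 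I will then use the finer pointwise information: each term of $h$ is $|f^{(l)}(x)|\,\omega_{\upsilon-l-\tau-2}(x)$ with $\tau\le\alpha-1$, which carries an extra decay factor $\omega_1$ compared with the $\tau=\alpha$ weighting. Combining this with integrability of $|f^{(l)}|^p\omega_{p(\upsilon-2-\alpha-l)+2}$ gives
\[
\int_\R |h\circ\phi^{-1}|^p\,\omega_{2-p}\rd x<\infty, \qquad\text{that is,}\qquad \int_0^{2\pi}|h(\theta)|^p \sin^{-p}(\theta/2)\rd\theta<\infty .
\]
This is incompatible with $|h|\ge |c|/2$ near $0$, because $\int_0\theta^{-p}\rd\theta$ diverges for $p>1$. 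Hence $c=0$, and the same argument applies at $2\pi$.

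Finally, the vanishing traces give $g_\upsilon^{(\tau)}(0)=g_\upsilon^{(\tau)}(2\pi)=0$ for $\tau\le\alpha-1$. Together with the $L^p$ bounds, this places $g_\upsilon$ in $W^{\alpha,p}_{\rm per}(0,2\pi)$.
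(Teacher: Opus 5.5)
Your proposal is correct. For \eqref{eq:g-tau} it follows the paper's route. You derive the Leibniz/Fa\`a di Bruno structure of $g_\upsilon^{(\tau)}$ yourself, where the paper imports it from its reference. Your exponent bookkeeping (a bound $|f^{(l)}|\,\omega_{\upsilon-l-\tau-2}$ per term, then $\rd\theta\asymp\omega_2\rd x$) is the paper's supremum estimate written with $1/\sin(\theta/2)\asymp\omega_{-1}(\phi(\theta))$.

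For the boundary limits \eqref{eq:g-boundary_} your argument is genuinely different. The paper shows that $g_\upsilon^{(\tau)}/\sin^{\varepsilon}(\theta/2)$ lies in $W^{1,1}(0,2\pi)\subset L^\infty(0,2\pi)$. This requires $L^1$ bounds on a term containing $g_\upsilon^{(\tau+1)}$ with an additional singular factor, handled by H\"older's inequality and a weight in $L^{p'}(\R)$, which forces $0<\varepsilon<1/p'$.

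You instead use the spare decay factor $\omega_1$ available when $\tau\le\alpha-1$ to obtain $g_\upsilon^{(\tau)}/\sin(\theta/2)\in L^p(0,2\pi)$. You then combine this with the continuity of $g_\upsilon^{(\tau)}$ up to the endpoints, already available from the $W^{\alpha,p}$ bound, and the non-integrability of $\theta^{-p}$ at $0$. This is shorter and needs neither the auxiliary $\varepsilon$ nor the H\"older step.

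It also never uses $p>1$: $\int_0\theta^{-1}\rd\theta$ diverges too, and $W^{1,1}(0,2\pi)\subset C([0,2\pi])$. So your argument would remove the loss $\delta>0$ in Remark~\ref{remark:case_p1}, exactly where the paper says its own proof breaks down.

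The paper's route does produce an explicit decay $|g_\upsilon^{(\tau)}(\theta)|\lesssim\sin^{\varepsilon}(\theta/2)\,\|f\|$ along the way. But once you know the boundary value vanishes, $g_\upsilon^{(\tau+1)}\in L^p$ gives $|g_\upsilon^{(\tau)}(\theta)|\le\theta^{1-1/p}\|g_\upsilon^{(\tau+1)}\|_{L^p}$, which is at least as good. Your route therefore loses nothing.
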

\begin{proof}
The following two results are proven in \cite[Appendix~A]{SHK2024}: (i) The weak derivative $g_\upsilon^{(\tau)}$, $\tau \in \N_0$, is a finite linear combination of terms of the form
\begin{equation}
\label{eq:ind1}
    \big((f^{(\tau_1)} \rho_\upsilon^{(\tau_2)}) \circ \phi \big) \prod_{j=1}^{\tau_1 + \tau_2+1} \phi^{(\tau_{3,j})},
\end{equation}
    where the nonnegative integers $\tau_1$, $\tau_2$, and $\tau_{3,j}$ satisfy 
    \[
    \tau_1 + \tau_2 \leq \tau \quad \text{and} \quad \sum_{j=1}^{\tau_1 + \tau_2 +1} \tau_{3,j} = \tau+1.
    \]
(ii) It holds that
\begin{equation}
\label{eq:ind2}
    \phi^{(\tau)}(\theta) = \frac{\psi_\tau(\theta)}{\sin^{\tau+1}(\theta/2)}, \qquad \tau \in \N_0,
\end{equation}
where $\psi_\tau \in C^\infty(\R)$ is a bounded finite linear combination of products of trigonometric functions.

Let us first tackle \eqref{eq:g-tau}. Combining \eqref{eq:ind1} and \eqref{eq:ind2}, it follows from the triangle inequality that it is enough to prove \eqref{eq:g-tau} with  
   \begin{equation}
   \label{eq:widetildeg}
    \widetilde{g}_{\upsilon,\tau_1,\tau_2,\eta}(\theta) = \frac{(f^{(\tau_1)} \rho_\upsilon^{(\tau_2)}) \circ \phi(\theta)}{\sin^{\eta+2}(\theta /2)}, \qquad \tau_1 + \tau_2 \leq \tau \leq \alpha, \ \ \eta = \tau + \tau_1 + \tau_2 \leq 2\tau,
    \end{equation}
replacing $g_\upsilon^{(\tau)}$. A direct calculation gives,
\begin{align}
\label{eq:tilde_g_p}
\big\|\widetilde{g}_{\upsilon,\tau_1,\tau_2,\eta}(\theta) \big\|_{L^p(0,2 \pi)}^p &= 
    \int_0 ^{2\pi} \left|f^{(\tau_1)}(\phi(\theta)) \, \rho_\upsilon^{(\tau_2)}(\phi(\theta)) \, \frac{1}{\sin^{\eta+2}(\theta/2)}\right|^p \rd \theta \nonumber \\[1mm]
    & =
  \int_{\R} \left|f^{(\tau_1)}(x) \, \rho_\upsilon^{(\tau_2)}(x) \, \frac{1}{\sin^{\eta + 2 }(\phi^{-1}(x)/2)} \right|^p \big|(\phi^{-1} ) ' (x)\big| \rd x
   \nonumber  \\[1mm]
    &\le 
    \int_{\R} \big|f^{(\tau_1)}(x) \big|^p \rho_{p(\upsilon - 2  - 2 \alpha) + 2 + p (\alpha - \tau_1)}(x) \rd x \  \times  \nonumber \\
    & \quad \ \times  \  \sup_{x\in\R}  \left|\frac{(\phi^{-1})'(x)}{\sin^{p(\eta+2)}(\phi^{-1}(x)/2)} \frac{(\rho_\upsilon^{(\tau_2)}(x))^p}{\rho_{p(\upsilon - 2  - 2 \alpha) + 2 + p (\alpha - \tau_1)}(x)}\right|  \nonumber \\[1mm]
    &\le 
     \|f\|_{\widetilde{W}^{\alpha,p}_{\rho_{p(\upsilon - 2  - 2\alpha) + 2}}(\R)}^p \  \times \\[1mm]
    & \quad  \ \times  \  \sup_{x\in\R}  \left|\frac{(\phi^{-1})'(x)}{\sin^{p(\eta+2)}(\phi^{-1}(x)/2)} \frac{(\rho_\upsilon^{(\tau_2)}(x))^p}{\rho_{p(\upsilon - 2  - \alpha - \tau_1) + 2}(x)}\right|. \nonumber
\end{align}
The estimate \eqref{eq:g-tau} thus follows by showing that the supremum term on the right-hand side of \eqref{eq:tilde_g_p} is finite. 

By virtue of Lemma~\ref{lemma:basic}, 
\begin{align}
\label{eq:rho_ratio_}
\left| \frac{(\rho_\upsilon^{(\tau_2)}(x))^p}{\rho_{p(\upsilon - 2  - \alpha - \tau_1) + 2}(x)} \right|
&\leq C \big(1 + x^2\big)^{-p(\upsilon + \tau_2) /2 + p(\upsilon-2-\alpha - \tau_1)/2+1} \nonumber \\
&= C \big(1 + x^2\big)^{ - p( \alpha + \tau_1 + \tau_2 + 2)/2 +1} \nonumber \\[1mm]
&\leq C' \big(1 + x^2\big)^{ - p( \eta + 2)/2 +1},
\end{align}
where the last step follows from the conditions in \eqref{eq:widetildeg} and the constants depend on $\upsilon$, $\alpha$, $\tau_1$, and $\tau_2$.
On the other hand, by basic trigonometry including \eqref{eq:invphi},
\begin{align}
\label{eq:inv_sin_}
\frac{(\phi^{-1})'(x)}{\sin^{p(\eta+2)}(\phi^{-1}(x)/2)} &= \frac{2 \gamma}{\gamma^2 + x^2} \frac{1}{\sin^{p(\eta + 2)}({\rm arccot}(-x/\gamma))} \nonumber \\[1mm]
&= \frac{2}{\gamma^{p\eta + 2p - 1}} \big(\gamma^2 + x^2 \big)^{p(\eta+2)/2-1}.
\end{align}
Combining \eqref{eq:tilde_g_p}, \eqref{eq:rho_ratio_} and \eqref{eq:inv_sin_} proves \eqref{eq:g-tau} and, in particular, that $g_\upsilon \in W^{\alpha, p}(0,2 \pi)$.

Let us define
\[
h_{\upsilon, \tau, \varepsilon}(\theta) = \frac{g_{\upsilon}^{(\tau)}(\theta)} {\sin^{\varepsilon}(\theta/2)}, \qquad \theta \in (0, 2 \pi),
\]
for a fixed but arbitrary $\varepsilon > 0$. To prove \eqref{eq:g-boundary_}, it is enough to show that $h_{\upsilon, \tau, \varepsilon}$ is bounded on the interval $(0,2\pi)$ for $\tau = 0,\dots, \alpha-1$. By the Sobolev embedding theorem in one dimension (e.g.,~\cite[Theorem~8.8]{Brezis11}),
\[
\|v \|_{L^\infty(0,2 \pi)} \leq C \|v \|_{W^{1,1} (0,2\pi)},
\]
which means that it is sufficient to deduce bounds for the $L^{1} (0,2\pi)$ norms of $h_{\upsilon, \tau, \varepsilon}$ and
$$
h'_{\upsilon,\tau,\varepsilon}(\theta) = \frac{g_{\upsilon}^{(\tau+1)}(\theta)} {\sin^{\varepsilon}(\theta/2)} - \frac{ \varepsilon \cos(\theta/2) \, g_{\upsilon}^{(\tau)}(\theta)} {2 \sin^{\varepsilon+1}(\theta/2)}, \qquad  \tau = 0,\dots, \alpha-1.
$$ 
Hence, combining \eqref{eq:ind1}, \eqref{eq:ind2} and the triangle inequality demonstrates that it is, in fact, enough to show that 
 \[
    \widetilde{g}_{\upsilon,\tau_1,\tau_2,\eta+ \varepsilon +1 - \delta(\tau)}(\theta) = \frac{(f^{(\tau_1)} \rho_\upsilon^{(\tau_2)}) \circ \phi(\theta)}{\sin^{\eta+\varepsilon + 3  - \delta(\tau)}(\theta /2)} 
    \]
belongs to $L^{1} (0,2\pi)$ for $\tau$, $\tau_1$,  $\tau_2$ and  $\eta$ as defined in~\eqref{eq:widetildeg}. Here, $\delta(\tau) = 1$ if $\tau = \alpha$ and $\delta(\tau) = 0$ otherwise.

By estimating as in \eqref{eq:tilde_g_p},
\begin{align}
\label{eq:limit_bound}
\big\|& \widetilde{g}_{\upsilon,\tau_1,\tau_2,\eta+ \varepsilon + 1 - \delta(\tau)}  \big\|_{L^1(0,2\pi)} \\[1mm]
    & \qquad = \int_{\R} \bigg|f^{(\tau_1)}(x) \, \rho_\upsilon^{(\tau_2)}(x) \, \frac{1}{\sin^{\eta +\varepsilon + 3 - \delta(\tau)}(\phi^{-1}(x)/2)} \bigg| \big|(\phi^{-1} ) ' (x)\big| \rd x \nonumber \\[1mm]
  & \qquad \leq \left(\int_{\R} \big|f^{(\tau_1)}(x) \big|^p \rho_{p(\upsilon - 2  - 2\alpha) + 2 + p(\alpha - \tau_1)}(x) \rd x \! \right)^{1/p} \! \! \! \|w\|_{L^{p'}(\R)},
\end{align}
where the second step follows from H\"older's inequality, with $p' = p/(p-1)$ denoting the H\"older conjugate of $p$ and
\[
w(x) = 
\left| \frac{(\phi^{-1} )'(x) }{\sin^{\eta + \varepsilon + 3 -\delta(\tau)}(\phi^{-1}(x)/2)} \right| 
   \left| \frac{(\rho_\upsilon^{(\tau_2)}(x))}{(\rho_{p(\upsilon - 2  - \alpha - \tau_1)+2}(x))^{1/p}} \right|, \qquad x \in \R.
\]
Since the first integral on the right-hand side of the inequality \eqref{eq:limit_bound} is bounded by 
$\|f\|_{\widetilde{W}^{\alpha,p}_{\rho_{p(\upsilon - 2  - 2\alpha) + 2}}(\R)}$, the proof can be completed by showing that $w \in L^{p'}(\R)$.

Through the same argument as in \eqref{eq:rho_ratio_}, we obtain
\begin{align*}
 \left| \frac{(\rho_\upsilon^{(\tau_2)}(x))}{(\rho_{ p(\upsilon - 2  - \alpha - \tau_1)+2}(x))^{1/p}} \right|
 &\le
 C_{\upsilon,\alpha,\tau} \big(1 + x^2\big)^{ -( \alpha + \tau_1 + \tau_2 + 2)/2 +1/p} \\[0mm]
 & =  C_{\upsilon,\alpha,\tau} \big(1 + x^2\big)^{ -( \tau  + \tau_1 + \tau_2 + 2 + (\alpha - \tau))/2 +1/p} \\[1mm]
 & \le C_{\upsilon,\alpha,\tau} \big(1 + x^2\big)^{ -( \eta + 2 + (1- \delta(\tau)))/2 +1/p},
\end{align*}
and as in \eqref{eq:inv_sin_}, we deduce that 
\begin{align*}
\left |\frac{(\phi^{-1})'(x)}{\sin^{\eta+\varepsilon +3 - \delta(\tau)}(\phi^{-1}(x)/2)} \right| &= \frac{2 \gamma}{(\gamma^2 + x^2)} \frac{1}{\sin^{\eta+\varepsilon +3 - \delta(\tau)}({\rm arccot}(-x/\gamma))} \nonumber \\[1mm]
&\leq C \big(\gamma^2 + x^2 \big)^{(\eta+\varepsilon +1 - \delta(\tau))/2} .
\end{align*}
It thus holds 
\[
|w(x)|\leq C \big(1 + x^2\big)^{ -1  + 1/p + \varepsilon/2} = C \big(1 + x^2\big)^{ -1/p'  + \varepsilon/2}
\]
from which the claim follows by fixing $\varepsilon \in (0, 1/p')$.
\end{proof}

Let us define a linear operator
\begin{equation}
\label{eq:Tnu}
T_\upsilon: f \mapsto g_\upsilon = ((f\rho_{\upsilon})\circ \phi) \phi', \qquad \upsilon \in \R,
\end{equation}
that maps an integrand function living on the real line to the transformed weighted integrand on the right-hand side of \eqref{eq:integral}.
The following simple corollary considers the boundedness of $T_\upsilon$ and completes the proof of Theorem~\ref{thm:main_result}.

\begin{corollary}
\label{cor:integer}
    The operator $T_{\upsilon}$, $\upsilon \in \R$, is bounded from $\widetilde{W}^{\alpha,p}_{\rho_{p (\upsilon - 2 + 2\alpha) + 2}}(\R)$ to  $W^{\alpha,p}(\T)$ for all $\alpha \in \N_0$ and $1 < p < \infty$.
\end{corollary}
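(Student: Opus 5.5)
The plan is to deduce the corollary from Proposition~\ref{thm:f-rho-norm-p}, since $T_\upsilon f=g_\upsilon$ is exactly the function treated there. Take $f$ in the stated domain $\widetilde{W}^{\alpha,p}_{\rho_{p(\upsilon-2+2\alpha)+2}}(\R)$. The Proposition gives the bounds \eqref{eq:g-tau} on $\|g_\upsilon^{(\tau)}\|_{L^p(0,2\pi)}$ for $\tau=0,\dots,\alpha$, and the boundary limits \eqref{eq:g-boundary_}. Summing the bounds gives $\|g_\upsilon\|_{W^{\alpha,p}(0,2\pi)}\le C\|f\|$. The vanishing one-sided limits give $g_\upsilon\in W^{\alpha,p}_{\rm per}(0,2\pi)$, which the isometry recalled after \eqref{eq:Lq-per-sob} identifies with $W^{\alpha,p}(\T)$. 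Linearity of $T_\upsilon$ is clear, so only the norm bound needs work.

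The key step, and the main obstacle, is matching the weight of the domain with the weight $\rho_{p(\upsilon-2-2\alpha)+2}$ required by Proposition~\ref{thm:f-rho-norm-p}. The two indices agree when $\alpha=0$. In that case the corollary is exactly the $L^p$ statement of the Proposition, namely
\[
\int_0^{2\pi}|g_\upsilon|^p\rd\theta\le C\int_\R|f|^p\rho_{p(\upsilon-2)+2}\rd x .
\]

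For $\alpha\ge1$ the domain index $p(\upsilon-2+2\alpha)+2$ is larger than the required one. The domain therefore contains functions that grow faster at infinity, so the embedding of weighted spaces runs the wrong way. I would first re-run the supremum computation of \eqref{eq:tilde_g_p}--\eqref{eq:inv_sin_} with the new weight. The combined power of $(1+x^2)$ there becomes $p(\tau+3\alpha)/2>0$, so the supremum is infinite and the argument breaks down.

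To decide whether this breakdown is genuine, I would test $f(x)=(1+x^2)^{s/2}$. As $\theta\to0^+$ one has $|\phi(\theta)|\sim 2\gamma/\theta$ and $\phi'\sim x^2/(2\gamma)$, hence $|g_\upsilon|\sim|x|^{s-\upsilon+2}$. After the change of variables, $\int|g_\upsilon|^p\rd\theta$ behaves like $\int^\infty x^{p(s-\upsilon+2)-2}\rd x$, which diverges once $p(s-\upsilon+2)\ge1$. The norm of $f$ in the stated domain is finite as long as
\[
ps<p(\upsilon-2+3\alpha)+1,
\]
which is the integrability condition on the $\tau=0$ term, the most restrictive one. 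When $\alpha\ge1$, both conditions can be met by choosing $s$ with $1\le p(s-\upsilon+2)<1+3\alpha p$. If this check holds up, the corollary as worded is valid only for $\alpha=0$, and for $\alpha\ge1$ the test function shows that $T_\upsilon$ does not even map the stated domain into $L^p(0,2\pi)$.

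Consequently the bound with the weight as written cannot be proved for $\alpha\ge1$. The step that completes Theorem~\ref{thm:main_result} is the version with index $p(\upsilon-2-2\alpha)+2$, which is the one Proposition~\ref{thm:f-rho-norm-p} supplies. The decisive item to verify carefully is the counterexample computation above.
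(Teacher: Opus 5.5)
Your analysis is correct. The index $p(\upsilon-2+2\alpha)+2$ in the statement is a sign typo. The paper's proof is exactly your first paragraph: Proposition~\ref{thm:f-rho-norm-p}, the definition \eqref{eq:Lq-per-sob2}, and the identification of $W^{\alpha,p}_{\rm per}(0,2\pi)$ with $W^{\alpha,p}(\T)$. That argument works only for the weight $\rho_{p(\upsilon-2-2\alpha)+2}$, which is the one appearing in Theorem~\ref{thm:main_result}, Proposition~\ref{thm:f-rho-norm-p} and the proof of Proposition~\ref{cor:interp}.

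Your test function $f(x)=(1+x^2)^{s/2}$ with $1\le p(s-\upsilon+2)<1+3\alpha p$ is a valid counterexample to the literal statement for $\alpha\ge1$. By Lemma~\ref{lemma:basic}, every term of the $\widetilde{W}$-norm has the same integrability condition as the $\tau=0$ term, so $f$ lies in the stated domain. Meanwhile $\int_0^{2\pi}|g_\upsilon|^p\rd\theta=\int_\R|f\rho_\upsilon|^p\big((\gamma^2+x^2)/(2\gamma)\big)^{p-1}\rd x$ diverges, so $T_\upsilon$ does not even map that space into $L^p(0,2\pi)$.
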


\begin{proof}
The claim follows from Proposition~\ref{thm:f-rho-norm-p}, the definition of $W^{\alpha,p}_{\rm per}(0,2\pi)$ in \eqref{eq:Lq-per-sob2}, and the identification of $W^{\alpha,p}_{\rm per}(0,2\pi)$ with  $W^{\alpha,p}(\T)$; see,~e.g.,~\cite[Proposition~2.3]{SHK2024}.
\end{proof}

A remark on the case $p=1$, which is excluded from Theorem~\ref{thm:main_result}, completes this section.

\begin{remark}
\label{remark:case_p1}
As Corollary~\ref{cor:integer} is a direct consequence of Proposition~\ref{thm:f-rho-norm-p} (cf.~the discussion before \eqref{eq:Lq-per-sob2}) and the approximation result of \cite[Theorem~2.4.1]{T2018_book} also holds for $p=1$, the validity of Theorem~\ref{thm:main_result} for $p=1$ only hinges on Proposition~\ref{thm:f-rho-norm-p}. The presented proof of \eqref{eq:g-tau} extends to the case $p=1$, but that of \eqref{eq:g-boundary_} fails: for $p' = \infty$, there exists no suitable $\varepsilon > 0$ on the last line of the proof. However, making the norm of the Sobolev space where $f$ lives stronger by decreasing the index of the associated weight allows to prove \eqref{eq:g-boundary_} for $p=1$ as well. Hence, for any $\alpha \in \N$ and $\delta > 0$,
    \begin{equation}
            \label{eq:main_result_p}
        \big| I_{\rho_\upsilon}(f) - Q_{\rho_\upsilon,n}(f) \big| \leq C_{\upsilon, \alpha, \delta} \, n^{-\alpha} \| f \|_{\widetilde{W}^{\alpha,1}_{\rho_{\upsilon - 2 \alpha - \delta}}(\R)},  
    \end{equation}
    where $C_{\upsilon, \alpha, \delta} > 0$ is independent of $f$ and $n \in \N$.
\end{remark}

\subsection{Upper bound of convergence for $\alpha > 1/p$}
\label{sec:upperbound_interp}
Next, we formulate a variant of Theorem~\ref{thm:main_result} for fractional order spaces. This result is suboptimal compared to Theorem~\ref{thm:main_result}: since we do not have a characterization of the spaces $\widetilde{W}^{\alpha,q}_{\rho_{\upsilon}}(\R)$ for all $\alpha > 1/p$ at our disposal, we present the result for the larger standard weighted spaces $W^{\alpha,q}_{\rho_{\upsilon}}(\R)$.
\begin{theorem}
\label{thm:main_result2}
    For any $\upsilon \in \R$, $1 < p < \infty$ and $\alpha > 1/p$,
        \begin{equation}
            \label{eq:main_result2}
        \big| I_{\rho_\upsilon}(f) - Q_{\rho_\upsilon,n}(f) \big| \leq C_{\upsilon, \alpha, p} \, n^{-\alpha} \| f \|_{W^{\alpha,p}_{\rho_{p(\upsilon - 2  - 2\alpha)+2}}(\R)},  
    \end{equation}
    where $C_{\upsilon, \alpha, p} > 0$ is independent of $f$ and $n \in \N$.
\end{theorem}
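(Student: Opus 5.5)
The plan is to prove Theorem~\ref{thm:main_result2} by complex interpolation of the operator $T_\upsilon$ from \eqref{eq:Tnu}, and then invoke the periodic trapezoidal-rule error bound for fractional smoothness. First, I record an integer-order bound for $T_\upsilon$ on the standard weighted spaces. By the embedding \eqref{eq:embeddings}, $W^{k,p}_{\rho_{\mu}}(\R)\subset \widetilde{W}^{k,p}_{\rho_{\mu}}(\R)$ continuously. Corollary~\ref{cor:integer}, or more precisely Proposition~\ref{thm:f-rho-norm-p}, then shows that for every $k\in\N_0$ the map
\[
T_\upsilon : W^{k,p}_{\rho_{p(\upsilon-2-2k)+2}}(\R)\to W^{k,p}(\T)
\]
is bounded. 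For $k=0$ this is just Hölder/change of variables, since the periodic compatibility conditions are void.

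Next, I interpolate between two integers. Given $\alpha>1/p$, I pick integers $k_0<k_1$ with $k_0\le\alpha\le k_1$ and write $\alpha=(1-\theta)k_0+\theta k_1$. The weight indices $\mu_j=p(\upsilon-2-2k_j)+2$ depend affinely on $k_j$, so $(1-\theta)\mu_0+\theta\mu_1=p(\upsilon-2-2\alpha)+2$. Since $\rho_{\mu_0}^{1-\theta}\rho_{\mu_1}^{\theta}=\rho_{\tilde\mu}$, the identities \eqref{eq:complex_int_weight} and \eqref{eq:complex_int_per} together with the interpolation property of the complex method give boundedness of
\[
T_\upsilon: W^{\alpha,p}_{\rho_{p(\upsilon-2-2\alpha)+2}}(\R)\to W^{\alpha,p}(\T).
\]
Two points need care here. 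The integer-order spaces must be identified with the Bessel potential spaces $P^{k,p}$ (this holds by \cite{Lofstrom1982} for $1<p<\infty$), and $T_\upsilon$ must be a single linear map defined compatibly on the sum of the two endpoint spaces. The latter is clear because $T_\upsilon$ is given pointwise by the same formula.

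Then I check that the quadrature is well defined and matches the integral. For $\alpha>1/p$, $W^{\alpha,p}(\T)\subset C(\T)$, so point evaluations of $g_\upsilon=T_\upsilon f$ at $\theta_j$ make sense. I must also confirm that these coincide with $f(\phi(\theta_j))\rho_\upsilon(\phi(\theta_j))\phi'(\theta_j)$ for the continuous representative of $f$; this holds because $f\in W^{\alpha,p}_{\rm loc}$ is continuous and $\phi$, $\rho_\upsilon$, $\phi'$ are smooth on $(0,2\pi)$. Moreover $I_{\rho_\upsilon}(f)=\int_0^{2\pi}g_\upsilon$ by \eqref{eq:integral}, and $Q_{\rho_\upsilon,n}(f)$ is the shifted trapezoidal rule applied to $g_\upsilon$.

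Finally, I apply the known error bound for the trapezoidal rule on $W^{\alpha,p}(\T)$ with $\alpha>1/p$, namely $|\int g-Q_n(g)|\le C n^{-\alpha}\|g\|_{W^{\alpha,p}(\T)}$, as in \cite{T2018_book} (the fractional version for Bessel-potential periodic spaces). The shift by $\pi/n$ is harmless because it is a translation, which is an isometry of $W^{\alpha,p}(\T)$. Combining this with the interpolated bound on $T_\upsilon$ yields \eqref{eq:main_result2}.

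I expect the main obstacle to be the interpolation step. The difficulty is justifying that Corollary~\ref{cor:integer} applies to the standard spaces at the integer endpoints, and that the endpoints can be chosen so that the interpolation theorems cited are applicable. If a fractional trapezoidal estimate is not directly available, I would instead interpolate the error functional $g\mapsto\int g-Q_n(g)$ itself. It is bounded by $Cn^{-k}$ on $W^{k,p}(\T)$ for integer $k\ge1$, so using $k_0\ge1$ when $\alpha\ge1$ would work; for $1/p<\alpha<1$, however, I would need a separate endpoint, which is why citing the fractional periodic result is preferable.
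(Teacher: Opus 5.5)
Your proposal is correct and follows essentially the same route as the paper. The paper's proof (Proposition~\ref{cor:interp} combined with \cite[Theorem~2.4.1]{T2018_book}) proceeds exactly as you do: it bounds $T_\upsilon$ on the integer-order standard weighted spaces via the embedding into $\widetilde{W}$ and Corollary~\ref{cor:integer}, interpolates between consecutive integer endpoints using \eqref{eq:complex_int_weight} and \eqref{eq:complex_int_per}, and then applies the same cited periodic trapezoidal estimate, so your fallback of interpolating the error functional is not needed.
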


\begin{proof}
The claim follows by combining Proposition~\ref{cor:interp} presented below with \cite[Theorem~2.4.1]{T2018_book}.
\end{proof}

Via complex interpolation of Sobolev spaces, we deduce the following weaker analogue of Corollary~\ref{cor:integer} for $\alpha \geq 0$ and $1 < p < \infty$.

\begin{proposition}
\label{cor:interp}
    The operator $T_{\upsilon}$, $\upsilon \in \R$, is bounded from $W^{\alpha,p}_{\rho_{p (\upsilon - 2 - 2 \alpha) + 2}}(\R)$ to $W^{\alpha,p}(\T)$ for all real $\alpha \geq 0$ and $1 < p < \infty$.
\end{proposition}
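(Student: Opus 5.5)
The plan is to prove boundedness first for integer smoothness and then reach real $\alpha$ by complex interpolation, using \eqref{eq:complex_int_weight} on the domain side and \eqref{eq:complex_int_per} on the target side. For $\alpha \in \N_0$ the claim follows from Corollary~\ref{cor:integer} combined with the first embedding in \eqref{eq:embeddings}. Concretely, $W^{\alpha,p}_{\rho_{\mu}}(\R) \subset \widetilde{W}^{\alpha,p}_{\rho_{\mu}}(\R)$ continuously with $\mu = p(\upsilon-2-2\alpha)+2$, and Proposition~\ref{thm:f-rho-norm-p} (equivalently Corollary~\ref{cor:integer}, whose index should be read as $p(\upsilon-2-2\alpha)+2$) then gives $T_\upsilon$ bounded from $W^{\alpha,p}_{\rho_{\mu}}(\R)$ to $W^{\alpha,p}(\T)$. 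Since for $1<p<\infty$ and integer $\alpha$ the space $W^{\alpha,p}_{\rho_\mu}(\R)$ coincides with $P^{\alpha,p}_{\rho_\mu}(\R)$, and $W^{\alpha,p}(\T)$ defined via \eqref{eq:Lq-per-sob} agrees with the one defined via \eqref{eq:Lq-per-sob3}, all norms match up to constants.

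For real $\alpha \ge 0$, choose integers $\alpha_0 = \lfloor \alpha \rfloor$ and $\alpha_1 = \alpha_0+1$, and set $\theta = \alpha - \alpha_0 \in (0,1)$; the case $\theta = 0$ is already covered. Define $\upsilon_j = p(\upsilon-2-2\alpha_j)+2$. The index map $\alpha \mapsto p(\upsilon-2-2\alpha)+2$ is affine, so $(1-\theta)\upsilon_0 + \theta\upsilon_1 = p(\upsilon-2-2\alpha)+2$. This is the essential observation. The same linear operator $T_\upsilon$ is bounded $P^{\alpha_j,p}_{\rho_{\upsilon_j}}(\R) \to W^{\alpha_j,p}(\T)$ for $j=0,1$. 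Because $T_\upsilon$ is defined pointwise by the same formula on the sum space $P^{\alpha_0,p}_{\rho_{\upsilon_0}}+P^{\alpha_1,p}_{\rho_{\upsilon_1}}$ (a subspace of $L^1_{\rm loc}$ on which composition and multiplication act consistently), it is a compatible operator between the two interpolation couples. The interpolation property of the complex method $[\cdot,\cdot]_\theta$ therefore yields boundedness
\[
[P^{\alpha_0,p}_{\rho_{\upsilon_0}}(\R), P^{\alpha_1,p}_{\rho_{\upsilon_1}}(\R)]_\theta \to [W^{\alpha_0,p}(\T), W^{\alpha_1,p}(\T)]_\theta .
\]
Identifying the left side via \eqref{eq:complex_int_weight} as $P^{\alpha,p}_{\rho_{p(\upsilon-2-2\alpha)+2}}(\R)$ and the right side via \eqref{eq:complex_int_per} as $W^{\alpha,p}(\T)$ gives the claim, with the operator norm bounded by $\|T_\upsilon\|_0^{1-\theta}\|T_\upsilon\|_1^{\theta}$.

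The main obstacle is the technical justification rather than any estimate. Two points need care. First, the identification of $W^{\alpha,p}$ with $P^{\alpha,p}$ at the integer endpoints must hold for the weights $\rho_{\upsilon_j}$, which may have arbitrary real (even negative) index; this is fine because $\rho_\upsilon$ is polynomially regular for every $\upsilon$, as noted after Lemma~\ref{lemma:basic}. Second, $T_\upsilon$ must act consistently on the sum of the two endpoint spaces. I would handle this by noting that both endpoint spaces embed into $L^p_{\rho_{\upsilon_0}}(\R)$ (the larger-weight one), and that $T_\upsilon$ maps this space boundedly into $L^p(\T)$ by the $\alpha=0$ case. Hence the two endpoint definitions of $T_\upsilon$ are restrictions of one linear map, which is exactly what the interpolation theorem needs.
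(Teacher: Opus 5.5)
Your proposal is correct and follows the paper's proof: boundedness at integer orders comes from Corollary~\ref{cor:integer} via the embedding $W^{\alpha,p}_{\rho_\mu}(\R)\subset\widetilde{W}^{\alpha,p}_{\rho_\mu}(\R)$, and then complex interpolation between consecutive integers via \eqref{eq:complex_int_weight} and \eqref{eq:complex_int_per}, using that the weight index is affine in $\alpha$. Your reading of the index in Corollary~\ref{cor:integer} as $p(\upsilon-2-2\alpha)+2$ is right (the sign there is a typo), and your remarks on the compatibility of $T_\upsilon$ on the sum space make explicit what the paper leaves to ``basic theory on complex interpolation''.
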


\begin{proof}
By virtue of \eqref{eq:complex_int_weight} and \eqref{eq:complex_int_per},
\begin{equation*}
\big[W^{\alpha,p}_{\rho_{p (\upsilon - 2 - 2 \alpha) + 2}}(\R), W^{\alpha+1,p}_{\rho_{p (\upsilon - 2 - 2 (\alpha + 1)) + 2}}(\R)\big]_\theta = W^{\alpha+\theta,p}_{\rho_{p (\upsilon - 2 - 2 (\alpha + \theta)) + 2}} (\R)
\end{equation*}
and 
\begin{equation*}
\big[ W^{\alpha,p}(\T), W^{\alpha+1,p}(\T) \big]_\theta = W^{\alpha+\theta,p}(\T)
\end{equation*}
for any $1 < p < \infty$, $\alpha \in \N_0$ and $0 < \theta < 1$. As $W^{\alpha,p}_{\rho_{p (\upsilon - 2 - 2 \alpha) + 2}}(\R) \subset \widetilde{W}^{\alpha,p}_{p (\upsilon - 2 - 2 \alpha) + 2}(\R)$ with a stronger norm for $\alpha \in \N_0$, Corollary~\ref{cor:integer} and basic theory on complex interpolation~\cite{Bergh1976} demonstrate that $T_{\upsilon}$, defined by \eqref{eq:Tnu}, is bounded from $W^{\alpha+\theta,p}_{\rho_{p (\upsilon - 2 - 2 (\alpha + \theta)) + 2}}(\R)$ to $W^{\alpha+\theta,p}(\T)$. This proves the claim via varying $\alpha \in \N_0$ and $0 < \theta < 1$.
\end{proof}

We complete this section with a remark that interprets Theorems~\ref{thm:main_result} and \ref{thm:main_result2} in case one has freedom to choose how the whole integrand on the left-hand side of \eqref{eq:integral} is divided into the ``target function'' and the ``weight''.

\begin{remark}
\label{remark:cancel}
    A given fixed integrand function $h:=f\rho_{\upsilon}$ on the left-hand side of \eqref{eq:integral} can be factored into $f$ and $\rho_{\upsilon}$ in an arbitrary manner as long as $\rho_{\upsilon}$ remains a polynomial weight of the form~\eqref{eq:weight}. Theorems~\ref{thm:main_result} and \ref{thm:main_result2} are thus valid for the best possible $\alpha$ obtained by optimizing $p$ and $\upsilon$ through the selection of the pair $(f,\rho_{\upsilon})$. 

Moreover, as the proof of Proposition~\ref{thm:f-rho-norm-p} tackles different terms in $g_\upsilon^{(\tau)}$ separately via the triangle inequality, it does not account for possible cancellation between them. Hence, one may sometimes observe a much higher convergence rate than predicted by Theorems~\ref{thm:main_result} and \ref{thm:main_result2}, even for $\alpha \in \N$ covered by the more optimal Theorem~\ref{thm:main_result}. This phenomenon will be demonstrated by one of the numerical examples in Section~\ref{sec:numerics}.
\end{remark}

\subsection{General lower bound for $\alpha \in \N$}\label{sec:lowerbound}

In this subsection, we discuss the optimality of our method, when measured by the worst-case error. 
To this end, let $A_{\rho_\upsilon, n}$ be a general linear $n$-point quadrature rule for evaluating the integral \eqref{eq:integral},
\begin{equation}
\label{eq:A_quadrature}
 A_{\rho_\upsilon, n}(f):=\sum_{j=1}^n w_j f(x_j),
\end{equation}
where $\{w_j\}_{j=1}^n \subset \R$ and $ \{x_j\}_{j=1}^n \subset \R$ are the associated weights and nodes, respectively. Note that choosing $\beta = p(\upsilon-2-2\alpha)+2$ in the following proposition proves the worst-case optimality of the M\"obius-transformed trapezoidal rule for $1 < p < \infty$ and $\alpha \in \N$, cf.~Theorem~\ref{thm:main_result}.

\begin{proposition}
\label{prop:lower_bound}
Let $1 < p< \infty$, $\upsilon \in \R$,
and $\alpha \in \N$. 
For any fixed linear quadrature of the form \eqref{eq:A_quadrature} and $\beta \in \R$,
    \begin{equation}\label{eq:lower-bound}
        \sup_{0\ne f\in \widetilde{W}^{\alpha,p}_{\rho_{\beta}}(\R)}
        \frac{|I_{\rho_\upsilon}(f)-A_{\rho_\upsilon,n}(f)|}{\|f\|_{\widetilde{W}^{\alpha,p}_{\rho_{\beta}}(\R)}} \ge c \, n^{-\alpha},
    \end{equation}
    where $c>0$ is a constant independent of $n$.
\end{proposition}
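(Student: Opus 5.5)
The plan is the standard fooling-function (bump) argument, carried out on a fixed compact interval so that the weights play no role. Fix the interval $[0,1]$. On it, both $\rho_\upsilon$ and $\rho_{\beta+p(\alpha-\tau)}$ for $\tau=0,\dots,\alpha$ are bounded above and below by positive constants depending only on $\upsilon,\beta,\alpha,p$ and $q$; this follows from Lemma~\ref{lemma:basic}, or simply from continuity and positivity. Consequently, for functions supported in $[0,1]$, the norm $\|\cdot\|_{\widetilde{W}^{\alpha,p}_{\rho_\beta}(\R)}$ is equivalent to the unweighted $W^{\alpha,p}(0,1)$ norm, and $I_{\rho_\upsilon}(f)$ is comparable to $\int_0^1 f$ whenever $f\ge 0$.

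Given the nodes $x_1,\dots,x_n$ of $A_{\rho_\upsilon,n}$, split $[0,1]$ into $2n$ intervals of length $h=1/(2n)$. At least $n$ of these intervals contain no node in their interior. Fix a nonnegative bump $\psi\in C_c^\infty(0,1)$ with $\int\psi>0$. For each node-free interval $[a_k,a_k+h]$, place $\psi_k(x)=\psi((x-a_k)/h)$, and set $f=\sum_k \psi_k$, the sum running over the at least $n$ node-free intervals. Then $f(x_j)=0$ for all $j$, so $A_{\rho_\upsilon,n}(f)=0$. On the other hand,
\[
I_{\rho_\upsilon}(f)\ge c\int_0^1 f = c\, n h\int\psi \ge c' > 0 .
\]

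The remaining step is the norm estimate, and it is routine. The bumps have disjoint supports, and $f^{(\tau)}$ scales as $h^{-\tau}$. Hence
\[
\|f^{(\tau)}\|_{L^p(0,1)}^p \le C\, n\, h^{1-\tau p}\,\|\psi^{(\tau)}\|_{L^p}^p \le C' h^{-\tau p},
\]
using $nh=1/2$. Summing over $\tau\le\alpha$ and applying the norm equivalence gives $\|f\|_{\widetilde{W}^{\alpha,p}_{\rho_\beta}(\R)}\le C n^{\alpha}$. Dividing, the ratio in \eqref{eq:lower-bound} is at least $c'/(Cn^{\alpha})$, which is the claim. The constants are independent of $n$ and of the specific quadrature, since they depend only on $\psi$ and on the bounds for the weights on $[0,1]$.

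There is no real obstacle here. The only point needing care is to confirm that $f$ lies in the space, which is clear because it is smooth and compactly supported. The heavy-tailed weights are irrelevant, because everything is localized to a compact set; this is also why the bound holds for every $\beta\in\R$. One could alternatively use $\omega$ in place of $\rho$ via Lemma~\ref{lemma:basic}.
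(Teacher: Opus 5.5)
Your argument is correct, but it is organized differently from the paper's.

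The paper never constructs a fooling function. It embeds $\mathring{W}^{\alpha,p}(0,1)$ into $\widetilde{W}^{\alpha,p}_{\rho_\beta}(\R)$ by zero extension. It then absorbs the weight into the integrand through the multiplication isomorphism $R_\upsilon\colon h\mapsto\rho_\upsilon h$ on $\mathring{W}^{\alpha,p}(0,1)$. This turns $A_{\rho_\upsilon,n}$ into an unweighted rule on $(0,1)$ with at most $n$ nodes and weights $w_j/\rho_\upsilon(x_j)$. Finally it quotes the unweighted lower bound of \cite[Remark~2]{T1990}.

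Your bump construction is in effect a direct proof of that cited bound. Because your $f$ is nonnegative and annihilated by the rule, you handle $\rho_\upsilon$ with a simple lower bound on $[0,1]$ instead of transforming the quadrature. The paper's route is shorter and modular. Yours is self-contained, makes the constant explicit, and uses only that $\rho_\upsilon$ and the $\rho_{\beta+p(\alpha-\tau)}$ are bounded below and above on $[0,1]$.

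Your route also buys something the paper says its own argument does not give. Your $f$ satisfies $\|f\|_{W^{k,p}_{\rho_\beta}(\R)}\le C_k n^{k}$ for every $k\in\N_0$. Combining \eqref{eq:complex_int_weight} with the complex interpolation inequality $\|f\|_{[X_0,X_1]_\theta}\le\|f\|_{X_0}^{1-\theta}\|f\|_{X_1}^{\theta}$ then gives $\|f\|_{W^{\alpha,p}_{\rho_\beta}(\R)}\le C n^{\alpha}$ for all real $\alpha\ge 0$ and $1<p<\infty$. So the same fooling function yields a matching lower bound for the fractional-order Theorem~\ref{thm:main_result2} as well.
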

\begin{proof}
    Denote by $\mathring{W}^{\alpha,p}(0,1)$ the closure of the compactly supported test functions $C^\infty_{\rm c}(0,1)$ in the topology of $W^{\alpha,p}(0,1)$. According to basic theory of Sobolev spaces~\cite[Lemma~3.22]{Adams1975}, each function in $\mathring{W}^{\alpha,p}(0,1)$ can be identified with an element of $\widetilde{W}^{\alpha,p}_{\rho_{\beta}}(\R)$ via zero-continuation. 
    In particular, for $f\in\mathring{W}^{\alpha,p}(0,1)$,
    \begin{align}
    \label{eq:norm_equivalence}
        \|f\|_{\widetilde{W}^{\alpha,p}_{\rho_{\beta}}(\R)}
    &=\bigg(\sum_{\tau=0}^{\alpha}\int_0^1 |f^{(\tau)}(x)|^p \rho_{\beta+p(\alpha-\tau)}(x)\rd \nonumber x\bigg)^{1/p}
    \\[1mm]
    & \le C_{\beta, \alpha} \, \|f\|_{\mathring{W}^{\alpha,p}(0,1)},
    \end{align}
    where the zero-extension of $f$ is denoted by the same symbol and
    \[
    C_{\beta, \alpha} := \max_{\tau = 0, \dots, \alpha} \, \max_{x\in[0,1]} (\rho_{\beta + p(\alpha - \tau)}(x))^{1/p}.
    \]
     Moreover, according to \cite[Remark~2]{T1990}, 
    \begin{equation}
    \label{eq:lowerbound0}
    \sup_{0\ne g\in \mathring{W}^{\alpha,p}(0,1)}
        \frac{\bigg |{\displaystyle \int_0^1} g(x) \rd x-\widehat{A}_n(g) \bigg|}{\|g\|_{ \mathring{W}^{\alpha,p}(0,1)}} \ge c' n^{-\alpha},
    \end{equation}
    for any $n$-point linear quadrature rule $\widehat{A}_n$ on $(0,1)$, with $c'$ independent of~$n$. Finally, note that the mapping $R_\upsilon: h \mapsto \rho_\upsilon h$ is a linear isomorphism on $\mathring{W}^{\alpha,p}(0,1)$ due to the smoothness and positiveness of $\rho_\upsilon$.
    
  Let us then prove \eqref{eq:lower-bound}. First of all, by virtue of \eqref{eq:norm_equivalence},
    \begin{align*}
        \sup_{0\ne f\in \widetilde{W}^{\alpha,p}_{\rho_{\beta}}(\R)} &
         \frac{|I_{\rho_\upsilon}(f)-A_{\rho_\upsilon,n}(f)|}{\|f\|_{\widetilde{W}^{\alpha,p}_{\rho_{\beta}}(\R)}} \\[1mm]
        & \ge C_{\beta, \alpha}^{-1} \,
         \sup_{0\ne f\in\mathring{W}^{\alpha,p}(0,1)}
        \frac{\bigg|{\displaystyle \int_0^1} f(x)\rho_{\upsilon}(x)\rd x -A_{\rho_{\upsilon},n}(f) \bigg|}{\|f\|_{\mathring{W}^{\alpha,p}(0,1)}}
        \\[1mm]
        &=
        C_{\beta, \alpha}^{-1} \,
        \sup_{0\ne g\in\mathring{W}^{\alpha,p}(0,1)}
        \frac{\bigg| {\displaystyle \int_0^1 } g(x)\rd x - A_{\rho_{\upsilon},n}(R_\upsilon^{-1} g) \bigg|}{\|R_\upsilon^{-1} g\|_{\mathring{W}^{\alpha,p}(0,1)}} \\[1mm]
       &\ge  \frac{\| R_\upsilon \|_{\mathscr{L}(\mathring{W}^{\alpha,p}(0,1))} }{ C_{\beta, \alpha}} \sup_{0\ne g\in\mathring{W}^{\alpha,p}(0,1)}  \frac{\bigg| {\displaystyle \int_0^1 } g(x)\rd x -A_{n}(g) \bigg|}{\|g\|_{\mathring{W}^{\alpha,p}(0,1)}},
        \end{align*}
where $A_n$ is a linear quadrature rule on $(0,1)$ with at most $n$ nodes (i.e., with those nodes of $A_{\rho_{\upsilon},n}$ that lie in the interval $(0,1)$) and corresponding weights $w_j/\rho_{\upsilon}(x_j)$. In consequence, the claim follows from \eqref{eq:lowerbound0}.
\end{proof}

Although one could replace $\widetilde{W}^{\alpha,p}_{\rho_{\beta}}(\R)$ by the smaller space $W^{\alpha,p}_{\rho_{\beta}}(\R)$ in Proposition~\ref{prop:lower_bound} without essentially altering its proof, the proposition cannot be trivially extended to prove the worst-case optimality of Theorem~\ref{thm:main_result2} for fractional smoothness indices $\alpha > 1/p$: 
\cite[Remark~2]{T1990} employed in the proof assumes an integer smoothness index.

It should be noted that Kuo, Plaskota and Wasilkowski have proved a lower bound \cite[Theorem~3]{KPW2016} for a related weighted function approximation problem, applicable also to numerical integration as pointed out in \cite[Remark~6]{KPPW2020}. The lower bound exhibits similar behavior in the number of quadrature nodes as ours \eqref{eq:lower-bound}, i.e., $n^{-\alpha}$ with $\alpha$ being a smoothness index for the considered function spaces that differ from those used in this work. Another notable difference in our setting is that we do not assume integrability or non-increasing property of the weights themselves. Although we assume the proofs in~\cite{KPW2016,KPPW2020} could be modified to our setting, we considered presenting an independent proof for Proposition~\ref{prop:lower_bound} a more straightforward approach.

\section{Polynomial exactness for $\omega$}
\label{sec:exact_omega}
Unlike Gaussian quadrature rules that maximize the degree of polynomial exactness, our method is designed for achieving the optimal rate of convergence. Nevertheless, our method is also polynomially exact for the basic weight \eqref{eq:weight0}, $\gamma = 1$ in \eqref{eq:variable_change} and certain polynomial degrees. This property is inherited from the exactness of the trapezoidal rule for trigonometric polynomials on the unit circle. We consider this an independently interesting property that merits separate discussion in this subsection.

\begin{proposition}
    Let $n\in\N$, $\gamma = 1$ and $\upsilon$ be an even positive integer that satisfies $\upsilon \leq 2n$. Then, for every nonnegative integer $m \le \upsilon - 2$,
    \[
    \int_{\R} x^m \omega_{\upsilon}(x) \rd x=
    Q_{\omega_\upsilon,n}(x^m),
    \]
    where the quadrature rule $Q_{\omega_\upsilon,n}$ is defined by \eqref{eq:quadrature} with $\rho_\upsilon = \omega_\upsilon$ and $\gamma = 1$ in the definition of $\phi = \phi_\gamma$.
\end{proposition}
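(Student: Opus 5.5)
We transform the integrand to the circle and show that, for $\gamma = 1$, $\upsilon$ even and $m \le \upsilon - 2$, the transformed integrand $g = ((x^m\omega_\upsilon)\circ\phi)\,\phi'$ is a trigonometric polynomial of degree at most $\upsilon/2 - 1$ (in $\theta$). The claim then follows from the classical exactness of the $n$-point trapezoidal rule on $[0,2\pi]$. Recall that, for any shifted equispaced grid $\theta_j = 2\pi j/n - \pi/n$, this rule integrates $\re^{\ri k\theta}$ exactly whenever $n \nmid k$, and in particular for all $|k| \le n-1$.

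With $\gamma=1$, we have $x = \phi(\theta) = -\cot(\theta/2)$. Hence
\[
1+x^2 = \frac{1}{\sin^2(\theta/2)}, \qquad \phi'(\theta) = \frac{1}{2\sin^2(\theta/2)}
\]
by \eqref{eq:invphi}. Since $\upsilon$ is even, $\omega_\upsilon(\phi(\theta)) = \sin^{\upsilon}(\theta/2)$ exactly, with no absolute values or fractional powers. Therefore
\[
g(\theta) = (-1)^m \tfrac12 \cos^m(\theta/2)\,\sin^{\upsilon-2-m}(\theta/2).
\]
Set $a = m$ and $b = \upsilon-2-m$. Both are nonnegative integers because $m \le \upsilon-2$, and $a+b = \upsilon-2$ is even. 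Writing $\cos(\theta/2)$ and $\sin(\theta/2)$ in terms of $\re^{\pm\ri\theta/2}$, the product $\cos^a(\theta/2)\sin^b(\theta/2)$ expands into a sum of terms $\re^{\ri \ell \theta/2}$ with $\ell \equiv a+b \pmod 2$ and $|\ell| \le a+b$. Since $a+b$ is even, every $\ell$ is even. Thus $g$ is a genuine $2\pi$-periodic trigonometric polynomial $\sum_{|k|\le (\upsilon-2)/2} c_k \re^{\ri k\theta}$.

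Finally, $(\upsilon-2)/2 \le n-1$ follows from $\upsilon \le 2n$. So every frequency satisfies $|k| \le n-1 < n$, and the trapezoidal rule with nodes $\theta_j$ is exact for each term. For $k=0$ this is trivial. For $0<|k|<n$, the sum $\frac{2\pi}{n}\sum_j \re^{\ri k\theta_j} = \frac{2\pi}{n}\re^{-\ri k\pi/n}\sum_{j=1}^n \re^{2\pi\ri kj/n}$ is a geometric sum that vanishes, which equals $\int_0^{2\pi}\re^{\ri k\theta}\rd\theta = 0$. Combined with the identity \eqref{eq:integral}, this gives $\int_\R x^m\omega_\upsilon(x)\rd x = Q_{\omega_\upsilon,n}(x^m)$. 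For the left-hand integral, convergence is guaranteed because $m-\upsilon \le -2$.

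The only delicate step is the parity bookkeeping, namely ensuring that the half-angle powers combine into integer frequencies. This works precisely because $\upsilon$ is even. That is where the hypothesis enters, and I would verify it carefully in the expansion.
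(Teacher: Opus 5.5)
Your proposal is correct and follows the paper's proof. Both arguments use the same change of variables to $\frac{(-1)^m}{2}\cos^m(\theta/2)\sin^{\upsilon-2-m}(\theta/2)$, identify this as a trigonometric polynomial of degree at most $\upsilon/2-1\le n-1$, and invoke exactness of the trapezoidal rule. The differences are cosmetic: you establish the trigonometric-polynomial property by a complex-exponential parity count rather than the paper's half-angle identities with an even/odd case split in $m$, and you verify exactness for the shifted nodes directly by a geometric sum instead of citing it.
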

\begin{proof}
By our change of variables $x= \phi_1(\theta) = -\cot(\theta/2)$, we have (cf.~\eqref{eq:invphi})
    \begin{align*}
        \int_{\R} x^m \omega_{\upsilon}(x) \rd x
       &=
      \frac{1}{2} \int_{0}^{2\pi} \bigg(-\frac{\cos(\theta/2)}{\sin(\theta/2)}\bigg)^m (\sin(\theta/2))^{\upsilon-2} \rd \theta
       \\&=
       \frac{(-1)^m}{2} \int_{0}^{2\pi} (\cos(\theta/2))^m (\sin(\theta/2))^{\upsilon-2-m} \rd \theta.
    \end{align*}
By virtue of standard trigonometric identities,
\begin{align*}
     (\cos&(\theta/2))^m  (\sin(\theta/2))^{\upsilon-2-m} 
 \\[2mm]
 &= \frac{1}{2^{\upsilon/2 -1}}
\left\{
\begin{array}{ll}
     \big(1 + \cos(\theta) \big)^{m/2} \big(1 - \cos(\theta) \big)^{(\upsilon - 2 - m)/2} & \text{if $m$ is even},\\[1mm]
     \sin ( \theta ) \big(1 + \cos(\theta) \big)^{(m-1)/2} \big(1 - \cos(\theta) \big)^{(\upsilon - 3 - m)/2} & \text{if $m$ is odd}, 
     \end{array}
     \right.
\end{align*}
which is a trigonometric polynomial of degree $\upsilon/2 - 1$ (note that by assumption, $m \le \upsilon - 3$ if $m$ is odd). The $n$-point trapezoidal rule is exact for integrating trigonometric polynomials of degree $n-1$ over the unit circle (see,~e.g.,~
\cite[Equation~(3.8.12)]{Atkinson_1989_book}), which is essentially a consequence of the fact that the regular polygon formed by equidistant points on the unit circle always has its center at the origin. Hence, the claim follows from the definition of the M\"obius-transformed trapezoidal rule in \eqref{eq:quadrature}. 
\end{proof}

\section{Numerical experiments}
\label{sec:numerics}
In this section, we demonstrate the M\"obius-transformed trapezoidal rule, with the choice $\gamma =1$ in \eqref{eq:variable_change}, by two numerical experiments employing the standard weight function $\omega_\upsilon$ from \eqref{eq:weight0}. The first experiment considers a finitely smooth integrand function 
$$
f_1(x)=|x|\cos(x+1),
$$ 
with the goal to numerically observe the interplay between the Sobolev smoothness and behavior at infinity. The second experiment studies the smooth function 
$$
f_2(x)=(x^4+x^2+x+1)^{1/4},
$$ 
whose derivatives are more well-behaved at infinity than $f_2$ itself (cf.~Lemma~\ref{lemma:basic}). This allows to numerically confirm that resorting to the spaces $\widetilde{W}_{\omega_\kappa}^{\alpha,p}(\R)$, instead of $W_{\omega_\kappa}^{\alpha,p}(\R)$, enables predicting more accurately the convergence rate of the M\"obius-transformed trapezoidal rule. We will also use $f_2$ as an example for the cancellation phenomenon mentioned in Remark~\ref{remark:cancel}.

Throughout this section, we use Matlab 2025b with double precision arithmetic, except when we calculate the reference value of the true integral using Mathematica.

\subsection{Finitely smooth integrand}
Since the absolute value function belongs to $W^{s,p}_{\rm loc}(\R)$ with $1 \leq p < \infty$ if and only if $s<1+1/p$, as can be deduced directly from the Sobolev--Slobodeckij formulation for the fractional Sobolev norm (see, e.g., \cite{Leoni2023} and \cite[Theorem~7.57(c)]{Adams1975}), and cosine is infinitely smooth with bounded derivatives, it follows that $f_1 \in W^{s,p}_{\omega_\kappa}(\R)$ for $1 \leq p < \infty$, $s<1+1/p$, and $\kappa > 1 + p$. As the derivatives of $f_1$ exhibit similar growth at infinity as $f_1$ itself, there is no extra benefit in employing the variable-weight spaces $\widetilde{W}_{\omega_{\kappa'}}^{s,p}(\R)$, and thus we resort to Theorem~\ref{thm:main_result2} when interpreting the observed numerical convergence rates.

Consider evaluating the integral
\begin{equation}
\label{eq:test1}
I_{\omega_\upsilon}(f_1) = \int_{\R} f_1(x) \omega_{\upsilon}(x)\rd x
\end{equation}
numerically. According to Theorem~\ref{thm:main_result2}, the M\"obius-transformed trapezoidal rule exhibits convergence rate $n^{-\alpha}$ for this task if $f_1$ belongs to $W^{\alpha,p}_{\omega_{p(\upsilon - 2  - 2\alpha)+2}}(\R)$ for any $p \in (1, \infty)$. Since $f_1 \in W^{s,p}_{\omega_\kappa}(\R)$ for $s < 1 + 1/p$ and $\kappa > 1+p$, this gives two conditions for $\alpha$:
\[
\alpha < 1 + 1/p \qquad \text{and} \qquad 1+p < p(\upsilon - 2  - 2\alpha)+2,
\]
or equivalently,
\[
\alpha < 1 + 1/p \qquad \text{and} \qquad \alpha < \frac{\upsilon - 3}{2} + \frac{1}{2p}.
\]
Letting $p$ tend to $1$ reveals that the convergence rate predicted by Theorem~\ref{thm:main_result2} is $n^{-\alpha}$ for
\begin{equation}
\label{eq:alpha_test1}
\alpha < \min \! \bigg\{2, \frac{\upsilon - 2}{2} \bigg\}. 
\end{equation}

Figure~\ref{fig:abs-x-cosx} illustrates the convergence of the absolute error when numerically evaluating the integral $I_{\omega_\upsilon}(f_1)$ in \eqref{eq:test1} by the M\"obius-transformed trapezoidal rule for varying $\upsilon$. As we are not aware of the exact value for $I_{\omega_\upsilon}(f_1)$, the outputs of the M\"obius-transformed trapezoidal rule are compared to highly accurate numerical evaluations of $I_{\omega_\upsilon}(f_1)$ by Mathematica. The convergence plots in Figure~\ref{fig:abs-x-cosx} are well aligned with the prediction \eqref{eq:alpha_test1} by Theorem~\ref{thm:main_result2}.

\begin{figure}
\centering
 \includegraphics{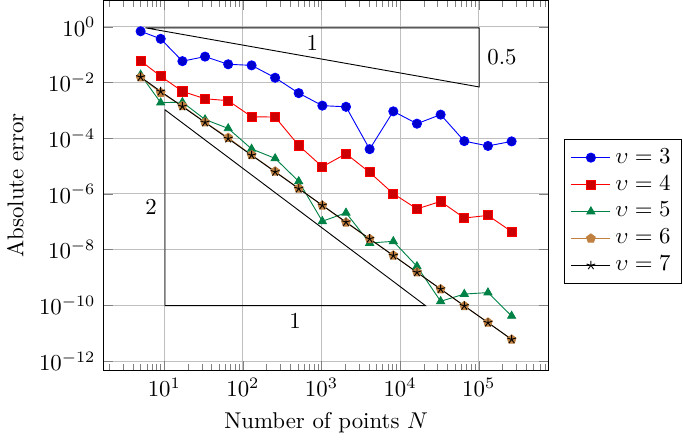}
\caption{Absolute integration error when applying the M\"obius-transformed trapezoidal rule to the integral $I_{\omega_\upsilon}(f_1)$ in \eqref{eq:test1} with the non-smooth integrand function $f_1(x)=|x|\cos(x+1)$. As the exact value of $I_{\omega_\upsilon}(f_1)$ is not known, the outputs of the M\"obius-transformed trapezoidal rule are compared to highly accurate evaluations of the integral by Mathematica.
}
\label{fig:abs-x-cosx}
\end{figure}

\subsection{Polynomial behavior at infinity}
Let us then consider numerically evaluating the integral 
\begin{equation}
\label{eq:test2}
I_{\omega_\upsilon}(f_2) = \int_{\R} f_2(x) \omega_{\upsilon}(x)\rd x.
\end{equation}
According to Lemma~\ref{lemma:basic}, 
$$
\big| f_2^{(\tau)} (x) \big| \leq C \omega_{-1 + \tau}(x), \qquad x \in \R, \ \tau \in \N_0.
$$
As $f_2$ is smooth, it follows straightforwardly from the definition \eqref{eq:sobolev-space2} that for any $\alpha \in \N_0$, $f_2 \in \widetilde{W}^{\alpha, p}_{\omega_\kappa}(\R)$ if and only if $\kappa > 1 + p (1-\alpha)$ --- observe that $f_2$ belongs to $W^{\alpha, p}_{\omega_\kappa}(\R)$ if and only if $\kappa > 1 + p$,~i.e.,~under a considerably stricter condition, which demonstrates the advantage in using the variable-weight Sobolev spaces. According to Theorem~\ref{thm:main_result}, the M\"obius-transformed trapezoidal rule should thus achieve the convergence rate $n^{-\alpha}$ for any $\alpha \in \N$ that satisfies
$$
1 + p (1-\alpha) < p (\upsilon - 2 - 2 \alpha) + 2 \quad \Longleftrightarrow \quad \alpha < \upsilon - 3 + \frac{1}{p}.
$$
Letting $p$ tend to $1$, we thus get the convergence rate $n^{-\alpha}$ for any positive integer satisfying $\alpha < \upsilon - 2$; see Remark~\ref{remark:case_p1}.

The convergence of the absolute error in the evaluation of the integral $I_{\omega_\upsilon}(f_2)$ by the M\"obius-transformed trapezoidal rule is presented in Figure~\ref{fig:poly-f2} for non-odd $\upsilon$ and in Figure~\ref{fig:poly-f2_2} for odd $\upsilon$. The reference values for the integrals are once again computed numerically with high accuracy by Mathematica. The convergence plots in Figure~\ref{fig:poly-f2} are in line with the rate hinted by Theorem~\ref{thm:main_result}, i.e.,~$n^{-\alpha}$ with $\alpha \approx \upsilon - 2$. (To be precise, Theorem~\ref{thm:main_result} does not actually predict this rate due to the exact condition being $\alpha < \upsilon - 2$ for an {\em integer} $\alpha$, as we have not considered interpolation of the $\widetilde{W}^{\alpha, p}_{\omega_\kappa}(\R)$ spaces to allow fractional smoothness indices.)

For odd $\upsilon > 2$, Figure~\ref{fig:poly-f2_2} seems to indicate exponential convergence, which is considerably faster than Theorem~\ref{thm:main_result} guarantees. This phenomenon can be explained by the following argumentation: Consider $m \in \R_+$ and let $P_{2d}$ be a positive polynomial of degree $2d$ for $d \in \N_0$, say, 
\[
P_{2d}(x) = \sum_{j=0}^{2d} \beta_j x^j, \qquad x \in \R.
\]
Then, by the standard change of variables $x = \phi_1(\theta) = -\cot(\theta/2)$,
\begin{align}
\label{eq:exponential}
    \int_\R & (P_{2d} (x))^{1/m} \omega_\upsilon(x) \rd x 
    =
    \frac{1}{2}\int_0 ^{2\pi} \big (P_{2d}(-\cot(\theta/2))\big)^{1/m} (\sin(\theta/2))^{\upsilon-2} \rd \theta
    \\[1mm] 
    &\kern+1cm =
     \frac{1}{2} \int_0 ^{2\pi} \big((\sin(\theta/2))^{2d} P_{2d}(-\cot(\theta/2))\big)^{1/m} (\sin(\theta/2))^{\upsilon-2-2d/m} \rd \theta,
     \nonumber
\end{align}
where
\begin{align}
\label{eq:exp_int}
(\sin(\theta/2))^{2d}  P_{2d}& (-\cot(\theta/2))   = \sum_{j=0}^{2d} \beta_j (\sin(\theta/2))^{2d - j} (-\cos(\theta/2))^j \\[1mm]
&= \frac{1}{2^d} \, \sum_{k=0}^d \beta_{2k} \big(1-\cos(\theta)\big)^{d-k} \big(1 + \cos(\theta)\big)^{k} \nonumber \\[0mm]
& \ \ \ \ -\frac{\sin(\theta)}{2^d} \, \sum_{k=0}^{d-1} \beta_{2k+1} \big(1-\cos(\theta)\big)^{d-k-1} \big(1+\cos(\theta) \big)^{k}
\end{align} 
is an infinitely smooth positive function on the one-dimensional torus $\T$. This means that the smoothness of the integrand function on the right-hand side of \eqref{eq:exponential} is solely determined by the factor $(\sin(\theta/2))^{\upsilon-2-2d/m}$, which is also infinitely smooth on $\T$ if $\upsilon-2-2d/m$ is a nonnegative even integer, as can be verified using the identity $2 \sin^2(\theta/2) = 1-\cos(\theta)$. 

In fact, interpreting the integrand on the right-hand side of \eqref{eq:exponential} as a function on the unit circle in the complex plane, it can be continued as a complex analytic function to an open annular neighborhood of the unit circle if $\upsilon-2-2d/m$ is a nonnegative even integer, which guarantees {\em exponential} convergence of the M\"obius-transformed trapezoidal rule when it is applied to \eqref{eq:exponential}. Indeed,
 the Fourier coefficients of an analytic function $h$ over the torus satisfy $|\widehat{h}(k)| \le C e^{-a|k|} $ for some positive constants $a$ and $C$; see,~e.g.,~\cite[Section~12, Lemma~1]{A1983}. Using the exactness of the $n$-point trapezoidal rule on the unit circle for trigonometric polynomials whose frequency is not a nonzero multiple of $n$, we have
\begin{align*}
    \bigg|\int_0^{2\pi} h(\theta)\rd \theta - \frac{1}{2\pi n}\sum_{j=1}^n h(\theta_j) \bigg|
    &=
     \frac{1}{2\pi n} \bigg| \sum_{j=1}^n \sum_{0\ne k\in\Z}\widehat{h}(nk)e^{{\rm i} nk \theta_j} \bigg| \\[1mm]
    & \le \frac{1}{2\pi} \sum_{0\ne k\in\Z} |\widehat{h}(nk)| 
   \le \frac{1}{2\pi}  \sum_{0\ne k\in\Z}C e^{-a|nk|} \\[1mm]
   &=\frac{C}{\pi}\frac{e^{-an}}{1-e^{-an}},
\end{align*}
demonstrating exponential convergence.
 
The integrand function $f_2$ can be written in the form $(P_{2d})^{1/m}$ for $d = 2$ and $m=4$, meaning that the above line of reasoning leads to exponential convergence of the M\"obius-transformed trapezoidal rule for the integral \eqref{eq:test2} if $\upsilon - 3$ is nonnegative and even,~i.e.,~if $\upsilon \geq 3$ is odd. This explains the exceptionally high numerical convergence rates in Figure~\ref{fig:poly-f2_2}.

\begin{figure}
\centering
 \includegraphics{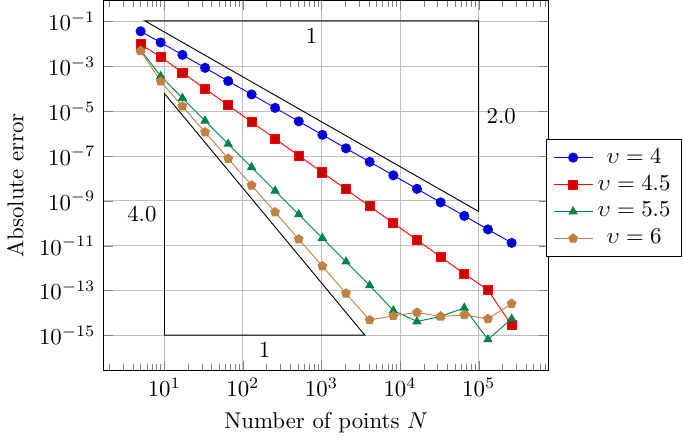}
\caption{Absolute integration error when applying the M\"obius-transformed trapezoidal rule to the integral $I_{\omega_\upsilon}(f_2)$ in \eqref{eq:test2} with the smooth integrand function $f_2(x)=(x^4+x^2+x+1)^{1/4}$ and non-odd $\upsilon$. As the exact value of $I_{\omega_\upsilon}(f_2)$ is not known, the outputs of the M\"obius-transformed trapezoidal rule are compared to highly accurate evaluations of the integral by Mathematica.
}
\label{fig:poly-f2}
\end{figure}
\begin{figure}
\centering
 \includegraphics{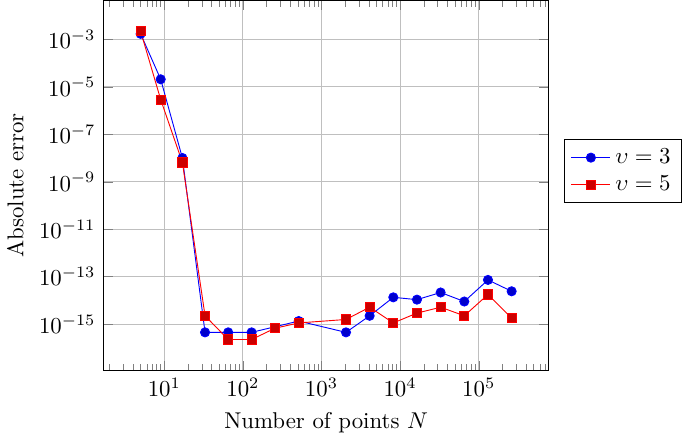}
\caption{Absolute integration error when applying the M\"obius-transformed trapezoidal rule to the integral $I_{\omega_\upsilon}(f_2)$ in \eqref{eq:test2} with the smooth integrand function $f_2(x)=(x^4+x^2+x+1)^{1/4}$ and odd $\upsilon$. As the exact value of $I_{\omega_\upsilon}(f_2)$ is not known, the outputs of the M\"obius-transformed trapezoidal rule are compared to highly accurate evaluations of the integral by Mathematica.
}
\label{fig:poly-f2_2}
\end{figure}

\section{Concluding remarks}
\label{sec:conclusion}
In this paper, we studied integration on the real line against polynomial weights. We proved that the Möbius-transformed trapezoidal rule achieves the optimal convergence rate for Sobolev spaces of integer smoothness: Theorem~\ref{thm:main_result} provides the upper bound, and Proposition~\ref{prop:lower_bound} gives the matching lower bound. Furthermore, by interpolating the underlying weighted Sobolev spaces, we extended the upper bound in a slightly weaker form to spaces of fractional smoothness; see Theorem~\ref{thm:main_result2}. The established convergence rates were verified through numerical experiments.

A natural extension of our theory is to consider multivariate integration. In this context, we are aware of recent interesting developments in high-dimensional numerical integration over $\R^d$, such as \cite{CDP2025,POH2025}, where quasi-Monte Carlo methods are used in combination with change of variables. Those results demonstrate dimension-independent convergence, but the cases considered are limited to prescribed smoothness of $\alpha=1$ or $\alpha=2$. In contrast, Kazashi, Goda, and one of the present authors showed in \cite{KSG2025} that quasi-Monte Carlo methods with M\"obius transformation achieve the optimal convergence rate $\calO(n^{-\alpha})$ up to a logarithmic factor, for Gaussian-weighted Sobolev spaces of arbitrary smoothness~$\alpha$, but with an underlying constant that increases exponentially with the dimension~$d$. We believe that M\"obius-transformed quasi-Monte Carlo methods can achieve similar convergence rates for polynomially-weighted Sobolev spaces over $\R^d$, but with the constant still cursed by the dimensionality. Aiming for dimension-independent optimal convergence of numerical integration in Sobolev spaces of arbitrary smoothness $\alpha$ provides an interesting avenue for future research.

\ifpreprint
\section*{Acknowledgments}
This work is supported by the Research Council of Finland (decisions 348503 and 359181).
\fi

\emergencystretch=2em
\printbibliography

\end{document}